\def\BibTeX{{\rm B\kern-.05em{\sc i\kern-.025em b}\kern-.08em
    T\kern-.1667em\lower.7ex\hbox{E}\kern-.125emX}}
\newtheorem{theorem}{Theorem}
\newtheorem{corollary}{Corollary}
\begin{document}

\title{Learning with Sparsely Permuted Data: A Robust Bayesian Approach\\
{\footnotesize }
}

\author{\IEEEauthorblockN{1\textsuperscript{st} Abhisek Chakraborty}
\IEEEauthorblockA{\textit{Department of Statistics} \\
\textit{Texas A\&M University}\\
College Station, Texas \\
abhisek\_chakraborty@tamu.edu}
\and
\IEEEauthorblockN{2\textsuperscript{nd} Saptati Datta}
\IEEEauthorblockA{\textit{Department of Statistics} \\
\textit{Texas A\&M University}\\
College Station, Texas \\
saptati@tamu.edu}

}

\maketitle

\begin{abstract}
Data dispersed across multiple files are commonly integrated through probabilistic linkage methods, where even minimal error rates in record matching can significantly contaminate subsequent statistical analyses. In regression problems, we examine scenarios where the identifiers of predictors or responses are subject to an unknown permutation, challenging the assumption of correspondence. Many emerging approaches in the literature focus on sparsely permuted data, where only a small subset of pairs ($k << n$) are affected by the permutation, treating these permuted entries as outliers to restore original correspondence and obtain consistent estimates of regression parameters. In this article, we complement the existing literature by introducing a novel generalized robust Bayesian formulation of the problem. We  develop an efficient posterior sampling scheme by adapting the fractional posterior framework and addressing key computational bottlenecks via careful use of discrete optimal transport and sampling in the space of binary matrices with fixed margins. Further, we establish new posterior contraction results within this framework, providing theoretical guarantees for our approach. The utility of the proposed framework is demonstrated via extensive numerical experiments.

\end{abstract}

\begin{IEEEkeywords}
fractional posterior, posterior consistency, pseudo likelihood, optimal transport, weighted rectangular loop algorithm
\end{IEEEkeywords}

\section{Introduction}
Government agencies, national laboratories, and large-scale corporate bodies with access to multiple data sources often integrate datasets, collected or generated at different times and independently of one another, to potentially address extensive research questions. Constraints related to budget, time, and resources prevent such agencies from gathering comprehensive datasets on their own. Record linkage aiming to identify which records across different datasets correspond to the same entity,  is a critical step in this data integration process. Due to the absence of unique entity identifiers amongst multiple data sources, probabilistic record linkage methods are used to measure similarity between quasi-identifiers. Even small error rates in such record matching can contaminate subsequent statistical analyses. Therefore, it is crucial to develop statistical methods to reduce the adverse effects of matching errors.

In regression analysis with response-covariate pairs $\{ (y_i, \mathbf{x}_i) \}_{i = 1}^n$, we  typically assume that each $y_i$ corresponds to the same statistical unit as $\mathbf{x}_i$.  We are interested in the situations where the identifiers of predictors or responses are subject to an unknown permutation, challenging the assumption of correspondence. In particular, when only a small subset of pairs are affected by the permutation, we term it as \emph{sparsely permuted data}. Recovering the permutation between the responses and covariates in such datasets is beneficial if one wishes  to restore the original correspondence, and obtain consistent estimates of the regression parameters. Specifically, we explore the issue in the context of the linear regression problem, where the task is to  regress a study variable $y$ on auxiliary variables $\mathbf{x}$ from a different data source. The number of matching errors, denoted by $k$, is assumed to be a small compared to the total sample size. That is, we assume $k<< n$. Such linear regression taks with sparsely  permuted data routinely arise in various applications, e.g, demographic surveys, pattern recognition in e-commerce transactions, multi-target tracking in radar systems, pose estimation, and correspondence estimation in computer vision, to name a few.

To frame the problem more precisely, assume we have data $(y_1, \mathbf{x}_1), \ldots, (y_n, \mathbf{x}_n)$ derived from matching two distinct data files, $A$ and $B$. Here, for each $i$, $y_i \in \mathbb{R}$ is from file $A$, and $\mathbf{x}_i \in \mathbb{R}^d$ is from file $B$. Given that record linkage is prone to errors, some $\mathbf{x}_i$ values may be incorrectly paired with non-corresponding $y_i$ values. Suppose the number of such mismatches is at most $k << n$. In that case, there exists an unknown permutation $\varphi$ on the set $[n] \neq \{1, \ldots, n\}$ such that $\varphi$ reorders at most $k$ indices. As a result, the pairs $(y_1, \mathbf{x}_{\varphi(1)}), \ldots, (y_n, \mathbf{x}_{\varphi(n)})$ follow the classical linear regression model $
y = \mathbf{x}^T \beta + \epsilon$,
where $\epsilon \sim N(0, \sigma^2)$ and $\mathbf{x} \perp \epsilon$. Let $\Pi$ denote the matrix representation of the permutation $\varphi$, and define the design matrix $X = \begin{pmatrix} \mathbf{x}_1 & \cdots & \mathbf{x}_n \end{pmatrix}^\top$. Consequently, the model can be expressed as
\[
\mathbf{y} = \Pi X \beta + \boldsymbol{\epsilon},
\]
where $\mathbf{y} = (y_1, \ldots, y_n)^\top$ and $\boldsymbol{\epsilon} = (\epsilon_1, \ldots, \epsilon_n)^\top$.
The goal is to accurately infer the parameters $\Pi$ and $(\beta,\sigma^2)$ from the partially mismatched pairs $\{y_i, \mathbf{x}_i\}_{i=1}^n$. The extent of mismatches can be quantified using the Hamming distance between permutation matrix $\Pi$ and the identity matrix $I_n$, defined as 
$$d_H(\Pi, I_n)=|\{i : \Pi_{ii} = 0\}|.$$
In general, we say that a permutation matrix $\Pi$ is  $k$-sparse if $d_H(\Pi, I_n) \le k$. Further,  suppose $\mathcal{P}_n$ denote the set of all permutation matrices in $\mathbb{R}^{n \times n}$, and define a subset $\mathcal{P}_{n,k} \subset \mathcal{P}_n$ as follows:
\[
\mathcal{P}_{n,k} = \{\Pi\in\mathcal{P}_n \mid d_H(\Pi, I_n) \le k\}.
\]
Then, to infer about the parameters of interest via maximum likelihood estimation, one may solve the optimization problem, $(\widehat{\Pi}, \hat{\beta}, \hat{\sigma^2}) = $
$
\mbox{\rm argmax}_{(\Pi, \beta, \sigma) \in \mathcal{P}_{n,k} \times \mathbb{R}^d}\big[ -\frac{1}{2\sigma^2}\|\Pi X \beta - \mathbf{y}\|_2^2 -\frac{n}{2} \log \sigma^2\big].
$
In this article, to complement the existing frequentist literature, we aim to present a generalised robust Bayesian formulation of the problem that enables automatic uncertainty quantification, and develop novel posterior contraction results. Before delving further into details, we will briefly review the key strands of literature relevant to our endeavor.



\subsection{Related works}
In \cite{Neter1965}, assuming the probabilities for matching a response $y_i$ to a covariate record $\mathbf{x}_j$ in the sample, is known or can be estimated, it was shown that the ordinary least squares estimate of $\beta$ is usually biased. The following seminal works \cite{DeGroot1971, DeGroot1976, DeGroot1980} operate under the assumption that $(y_i, \mathbf{x}_i)$  pairs follow a bivariate normal distribution up to a permutation, attempt to recover  the permutation and consistently estimate the correlation coefficient of the bivariate normal distribution. 
\cite{abid2017linear} proposed an estimator of $\beta$ that is shown to be consistent for dimension $d = 1$, and shows promising empirical results even for $d>1$. \cite{Pananjady2016}  established precise conditions on the signal-to-noise ratio, sample size \( n \), and dimension \( d \), determining when  the permutation matrix \( \Pi \) can be exactly or approximately recovered, assuming the entries of the matrix \( X \) are independently drawn from a standard Gaussian distribution.   \cite{collier2016minimax} introduced a rigorous framework to examine the problem of the permutation estimation from a minimax standpoint, and established  optimality  of various natural estimators.

Bayesian approaches for record linkage \cite{steorts2015entity, steorts2016bayesian, sadinle2017bayesian, taylor2023fast} have garnered much attention in recent years too.  \cite{tancredi2011hierarchical} proposed a hierarchical Bayesian approach for linking statistical records observed on different occasions.  \cite{gutman2013bayesian} devised a Bayesian procedure that simultaneously models the record linkage process and the associations between variables in the two files, thereby enhancing matching accuracy and reducing estimation bias. Readers may refer to  \cite{binette2022almost} for a comprehensive review.

An interesting emerging strand  in the frequentist literature operates under the  assumption of the sparsely permuted data, i.e, $k<<n$. Such approaches look at the permuted  entries in the data set as outliers. For instance, \cite{slawski2019} proposed a robust regression formulation of the linear regression problem with sparsely permuted data, and presented a two-stage procedure to first estimate $\beta$, and then utilize this estimate of $\beta$ to recover the permutation $\Pi$. Subsequently, \cite{slawski2020two} extends it to the case with multiple response variables, and again propose a two-stage method  under the assumption that most pairs are correctly matched.  Our goal here is to adopt this view point of handling mismatches as contamination, and develop a generalised Bayesian  framework for simultaneous inference on $(\beta, \Pi)$. The joint robust Bayesian inference  can  circumnavigate the issues with lack of propagation of uncertainties between the two stages of the existing  procedures, and enable us to conduct  theoretical analysis of joint posterior of $(\beta, \Pi)$.

To that end, we appeal to  the rich literature on robust Bayesian procedures,
that provide reliable probabilistic inference even under mild model misspecification and/or presence of outliers. One strategy to develop robust Bayesian linear regression consists of assuming heavy tailed parametric error distributions to construct the likelihood (\cite{sutradhar2003robust, lee2002robust}), which in combination with a suitable prior specification, yields a robust posterior. Notably, non-parametric Bayes methods are also routinely used to guard against model misspecification \cite{muller2004nonparametric, 10.1214/009053606000000029, 10.2307/24310519}.  However, the issues regarding the presence of a large number of uninterpretable parameters in such non-parametric Bayes procedures has led to a recent proliferation of alternative strands in the literature, based on pseudo-likelihood \cite{RePEc:eee:econom:v:115:y:2003:i:2:p:293-346, 2008,JMLR:v18:16-655, safebayes,  doi:10.1080/01621459.2018.1469995} and empirical likelihood  \cite{10.2307/30042042, 10.1093/biomet/92.1.31,chib2021bayesian, chakraborty2023robustprobabilisticinferenceconstrained, e26030249}.
In particular, \cite{doi:10.1080/01621459.2018.1469995} introduced a coherent pseudo likelihood based  approach to carry out robust Bayesian inference under mild model perturbations. Instead of conditioning directly on the observed data to define the posterior of the parameters of interest, this method conditions on a neighborhood of the empirical distribution of the observed data. When such neighborhood is defined based on the relative entropy, the resulting coarsened posterior can simply be approximated by tempering the likelihood, i.e, raising it to a fractional power. This allows for inference to be implemented easily with standard methods, and analytical solutions can be obtained when using conjugate priors.

\subsection{Our contributions}
In this article, as indicated earlier, we treat the permuted data entries in the linear regression task with sparsely permuted data as outliers. Complementing the existing literature, we present a novel generalized robust Bayesian formulation of the problem. Further, adapting the fractional posterior framework, we develop an efficient posterior sampling scheme, carefully navigating the key computational bottlenecks. Further, under  the proposed fractional posterior framework, we establish novel posterior contraction results to provide theoretical guarantees. Finally, the versatility of the proposed solution is illustrated by its straightforward extension to the quantile regression problem with sparsely permuted data.

\section{Proposed methodology}
Let $[t]$ denote the set of integers $\{1, 2,\ldots, t\}$. The  Hamming distance between two permutation matrices $\Pi_1=((\pi^{(1)}_{ij}))_{i=1, j=1}^n$ and $\Pi_2=((\pi^{(2)}_{ij}))_{i=1, j=1}^n$ is defined as $d_H(\Pi_1, \Pi_2)=\sum_{i=1}^n\sum_{j=1}^n |\pi^{(1)}_{ij}-\pi^{(2)}_{ij}|$. Let $\mathcal{P}_n$ denote the set of all $n\times n$ permutation matrices, and define the subsets $\mathcal{P}_{n,a} = \{\Pi\in\mathcal{P}_n \mid d_H(\Pi, I_n) \le a\}$ for any $a\in[n]$, where $I_n$ is the diagonal matrix of order $n$. Let $\mathcal{U}(\mathcal{P}_{n,a})$ denote the uniform distribution over the restricted space of permutation matrices $\mathcal{P}_{n,a}$.

\subsection{Model and prior specification}\label{ssec:main_model}
Suppose  we observe data $(y_i, \mathbf{x}_i)\in\mathbf{R}\times\mathbf{R}^d, i\in[n]$ with mismatches in at most $k (<< n)$ entries. We consider the model
\begin{align}\label{eqn:model1}
 \mathbf{y} = \Pi X \beta + \boldsymbol{\epsilon},\quad \boldsymbol{\epsilon} \sim\mbox{N}_n(\mathbf{0}, \sigma^2 \mbox{I}_n),  
\end{align}
where  the permutation matrix $\Pi=((\pi_{ij}))_{i=1, j=1}^n$ is assumed to be such that 
\begin{align}\label{eqn:model2}
  d_H(\Pi, I_n)=\sum_{i=1, j=1}^n |\pi_{ij}-1|\leq k.  
\end{align}
Our goal is to develop a fully Bayesian framework to infer the parameters $\Pi$ and $(\beta,\sigma^2)$. To that end, we first specify a uniform prior on $\Pi$ as follows
\begin{align}\label{eqn:model3}
\Pi\sim\mathcal{U}(\mathcal{P}_{n,a}).
\end{align}
That is, we assume $\pi(\Pi) =\frac{1}{|\mathcal{P}_{n,a}|}$, for $\Pi\in\mathcal{P}_{n,a}$. One may specify more elaborate priors that still impose  the restriction $d_H(\Pi, I_n)\leq k$, but we choose to keep our prior specification simple to ensure brevity of exposition. The hierarchical specification is completed by positing non-informative priors on the regression coefficients $\beta$ and error variance $\sigma^2$ as follows
\begin{align}\label{eqn:model4}
&\beta\sim \mbox{N}_d(0, 1000\mbox{I}_d),\quad \sigma^2\sim \mbox{TN}_d(0, 1000; 0, \infty),
\end{align}
where $\mbox{TN}_d(a, b; c, d)$ denotes the normal distribution with mean $a$ and variance $b$, restricted to the interval $(c,d)$. 
Under the model and prior specification in \eqref{eqn:model1}-\eqref{eqn:model4}, the joint posterior of $(\beta, \sigma^2, \Pi)$ given data can be expressed as 
\begin{align*}
&\pi(\beta, \sigma^2, \Pi\mid\{\mathbf{x}_i, y_i\}_{i=1}^n)\ \propto\notag\\
&\bigg\{\frac{1}{\sigma^2}\exp\bigg(-\frac{||\mathbf{y} - \Pi X\beta||^2}{\sigma^2}\bigg)\bigg\}\pi(\beta)\pi(\sigma^2)\pi(\Pi).
\end{align*}

One may simply develop a Gibbs sampling scheme to sample from the joint posterior above to carry out a fully Bayesian inference, via cyclically sampling from the full conditional distribution of each of the parameters given others. However, sampling from the full conditional distribution of $[\Pi\mid\beta, \sigma^2]$ pose a difficult combinatorial problem, and may require a judicious initialization scheme to ensure sampling efficiency. Instead, we propose to treat the permuted data entries as outliers, and develop robust Bayesian approach to carry out the inference. The proposed approach, in combination with a carefully crafted computational scheme, produce automatic uncertainty quantification. '

The key idea to develop the robust Bayesian procedure \cite{doi:10.1080/01621459.2018.1469995} is as follows. Customarily, one conditions the event that the observed data is generated from the true data generating mechanism, to define the posterior. Instead, one can condition on the event that the observed data lies in a neighbourhood of the true data generating mechanism to define a robustified posterior, that yields reliable inference by  allowing for mild perturbations in the data generating mechanism by construction. We clarify the details in the sequel.

Let  $\{\mathbf{x}^{\star}_{i}, y^{\star}_i\}_{i=1}^n$ denote an unobserved  data set without any permutation, identically and independently generated from the linear regression model. However, the observed sparsely permuted data $\{\mathbf{x}_{i}, y_i\}_{i=1}^n$  are actually a mildly corrupted version of $\{\mathbf{x}^{\star}_{i}, y^{\star}_i\}_{i=1}^n$ in the sense that 
\begin{align*}
    \mbox{\rm D}(\hat{\rm P}_{\{{\mathbf{x}^{\star}_{i}, y^{\star}_i\}_{i=1}^n}}, \hat{\rm P}_{\{{\mathbf{x}_{i}, y_i\}_{i=1}^n}}) < r,
\end{align*}
for some statistical distance \(\mbox{D}\) and some \(r > 0\), where \(\hat{\rm P}_{\{{\mathbf{x}_{i}, y_i\}_{i=1}^n}} = \frac{1}{n} \sum_{i=1}^{n} \delta_{\mathbf{x}_i, y_i}\) denotes the empirical distribution of $\{\mathbf{x}_{i}, y_i\}_{i=1}^n$. If there were no permutation, then we should use the standard posterior, conditioning  on the event that $\{\mathbf{x}^{\star}_{i}, y^{\star}_i\}_{i=1}^n =  \{\mathbf{x}_{i}, y_i\}_{i=1}^n$. However, due to the corruption arising from the permutation, we  condition on the event that $\mbox{\rm D}(\hat{\rm P}_{\{{\mathbf{x}^{\star}_{i}, y^{\star}_i\}_{i=1}^n}}, \hat{\rm P}_{\{{\mathbf{x}_{i}, y_i\}_{i=1}^n}}) < r$. In other words, rather than the standard posterior \(\pi(\beta,\sigma^2, \Pi \mid \{\mathbf{x}_{i}, y_i\}_{i=1}^n )\), we should consider the modified posterior
\begin{align*}
    \pi_{\rm mod}(\beta,\sigma^2, \Pi \mid \mbox{\rm D}(\hat{\rm P}_{\{{\mathbf{x}^{\star}_{i}, y^{\star}_i\}_{i=1}^n}}, \hat{\rm P}_{\{{\mathbf{x}_{i}, y_i\}_{i=1}^n}}) < r ).
\end{align*}
Further,  we assume an exponential prior on $r$ with mean $ 1/\kappa$, independently of $(\beta, \sigma^2, \Pi)$ and data. Then, one can show that 
\begin{align*}
    &\pi(\beta,\sigma^2, \Pi \mid \mbox{\rm D}(\hat{\rm P}_{\{{\mathbf{x}^{\star}_{i}, y^{\star}_i\}_{i=1}^n}}, \hat{\rm P}_{\{{\mathbf{x}_{i}, y_i\}_{i=1}^n}}) < r )\notag\\
    \approx&\exp\bigg\{-\kappa \mbox{\rm D}(\hat{\rm P}_{\{{\mathbf{x}^{\star}_{i}, y^{\star}_i\}_{i=1}^n}}, \hat{\rm P}_{\{{\mathbf{x}_{i}, y_i\}_{i=1}^n}})\bigg\}\pi(\beta)\pi(\sigma^2)\pi(\Pi).
\end{align*}
Readers can refer to \cite{doi:10.1080/01621459.2018.1469995} for details of the derivation of such \emph{coarsened posteriors}in the general set up. Further, suppose  the statistical discrepancy $\mbox{D}$ to be the relative entropy between two probability measures, defined by $D(p_0, p_1) =\int p_0\log\big(\frac{p_0}{p_1}\big)dp_0.$ Then, the modified posterior further simplifies to 
\begin{align}\label{eqn:fracpost}
     &\pi_{\rm mod}(\beta,\sigma^2, \Pi \mid \mbox{\rm D}(\hat{\rm P}_{\{{\mathbf{x}^{\star}_{i}, y^{\star}_i\}_{i=1}^n}}, \hat{\rm P}_{\{{\mathbf{x}_{i}, y_i\}_{i=1}^n}}) < r )\stackrel{\text{approx}}{\propto}\notag\\
    &\bigg\{\frac{1}{\sigma^2}\exp\bigg(-\frac{||\mathbf{y} - \Pi X\beta||^2}{\sigma^2}\bigg)\bigg\}^{\alpha}\pi(\beta)\pi(\sigma^2)\pi(\Pi),
\end{align}
for some $\alpha\in(0, 1)$ that depends on $\kappa$. In what follows, $\alpha$ is a hyper-parameter that needs to be specified.
In numerical studies, we shall demonstrate that $\alpha\approx 1/n$ provides reasonable results across varied data generating mechanisms.

Before we present the computational details, we shall briefly discuss the rationale behind adopting the fractional posterior approach, compared to other existing robust Bayesian approaches. The advantages are two fold. First, the fractional posterior approach allows us to adapt existing posterior computation schemes for efficient sampling from the standard posterior, facilitating their application to our specific case.
Secondly, the fractional posterior approach lends itself to concise and rigorous theoretical analysis. We have developed novel posterior contraction results for inference arising from the modified joint posterior in \eqref{eqn:fracpost}, providing key insights into the proposed methodology. Similar results could potentially be derived for alternative pseudo-posterior approaches or methods that model the error \(\mathbf{\varepsilon}\) using heavy-tailed distributions, and this presents a promising direction for future research.

\subsection{Posterior computation}\label{ssec:mcmc}
We present a Gibbs sampling scheme to sample from the joint posterior in \eqref{eqn:fracpost} to carry out a fully Bayesian inference, via cyclically sampling from the full conditional distributions of $[\beta, \sigma^2\mid\Pi]$ and $[\Pi\mid\beta,\sigma^2]$. 

{\textbf{Step 1.}} To sample from the full conditional distribution of $[\beta, \sigma^2\mid\Pi]$, one may develop a naive Metropolis--Hastings or Gibbs sampling scheme. However, depending on the choice of priors on $\beta$ and $\sigma^2$, one needs to tailor the sampler. To ensure ease of the practitioners, we utilize an off-the-shelf Hamitonian Monte Carlo (HMC) algorithm, available in the probabilistic programming language Stan \cite{rstan}.

Without getting into minute details, we provide a quick primer on key ideas of the HMC algorithm (\cite{neal2011mcmc, betancourt2017conceptual}). HMC is a powerful MCMC method for sampling from posterior distributions, leveraging concepts from physics to improve efficiency. A target distribution, say $\pi(\eta)$, is explored by simulating Hamiltonian dynamics, which combines the potential energy (negative log-posterior) and kinetic energy (typically Gaussian). Starting with an initial \(\eta\) and momentum \(r\), HMC uses leapfrog steps for numerical integration: update momentum (half-step) $r_{t + \frac{1}{2}} = r_t - \frac{\epsilon}{2} \nabla_\eta \log{\pi}(\eta_t),$ update position (full-step) $\eta_{t+1} = \eta_t + \epsilon r_{t + \frac{1}{2}},$ and update momentum (half-step) $r_{t+1} = r_{t + \frac{1}{2}} - \frac{\epsilon}{2} \nabla_\eta \log\pi(\eta_{t+1}).$ These steps are carried out to propose new states, which are then accepted or rejected via a Metropolis-Hastings correction, ensuring detailed balance. Thus, HMC efficiently traverses the parameter space, making it suitable for semi-automated Bayesian inference. 

\textbf{Step 2.} Next, we turn our attention to sampling from the full conditional distribution of $[\Pi\mid\beta, \sigma^2]$. This pose a difficult combinatorial problem. To that end,  we propose  a carefully crafted computational scheme, utilizing  a discrete optimal transport (\cite{Villani2003TopicsIO, cuturi2013sinkhorn}) guided proposal scheme, and followed by sampling scheme in space of binary matrices with fixed margins (\cite{miller2013exact, BRUALDI198033, curveball, wang2020fast}).

We first develop a computationally convenient MC-EM algorithm, where instead of sampling from the full conditional distribution of  $[\Pi\mid\beta, \sigma^2]$, we update the chain with the posterior mode of  $[\Pi\mid\beta, \sigma^2]$. Specifically,  to compute the posterior mode, we go over the following steps.
\begin{itemize}
\item \textbf{Cost matrix.} First, given $\beta$ and $\sigma^2$, we compute the $n\times n$  cost matrix 
{\small
\begin{align}\label{eqn:ot_cost}
    \mbox{L}= ((l_{ij})) = \bigg(\bigg(\alpha\bigg[\frac{(y_i -\beta^T(\Pi X)_{j})^2}{2\sigma^2} + \log\sigma^2\bigg]\bigg)\bigg),
\end{align}}
 where $\mathbf{1}_{n}$ is a vector of $n$ $1$s, and $(\Pi X)_{j}$ denotes the $j$-th row of $\Pi X$. 
\item  \textbf{Discrete optimal transport.} Next, we define the polytope of $n\times n$ binary  matrices 
$$\mathcal{P}^{n\times n}:=\mbox{U}(\mathbf{1}_n, \mathbf{1}_n):=\{\mbox{B}\mid \mbox{B} 1_{n} =  1_{n};\ \mbox{B}^{T}1_{n} =  1_{n}\},$$
and solve the constrained binary optimal transport problem 
\begin{align}\label{eqn:opt_B}
    \mbox{B}_{\rm opt}= \mbox{\rm argmin}_{\mbox{B}\in \mbox{U}( 1_{n},\  1_{n})} \langle\mbox{B}, \mbox{L} \rangle,
\end{align}
where $\langle\mbox{B}, \mbox{L}\rangle = \rm{tr}(\mbox{B}^T \mbox{L})$. 
\end{itemize}
This  describes the scheme to compute the mode of the full conditional distribution of $[\Pi\mid\beta,\sigma^2]$, and completes the MC-EM algorithm to maximize the joint posterior of $[\beta, \sigma^2, \Pi]$ in \eqref{eqn:fracpost}.  

One may replace the last optimization step  by a non-uniform sampling step of binary matrices with fixed margins, to describe a complete Gibbs sample. We describe this in the sequel. We need to sample from the space of binary matrices $\mbox{U}(\mathbf{1}_n, \mathbf{1}_n)$ according to the non-uniform probability distribution defined by the weight matrix 
$$\Omega =((\omega_{ij})):= \exp{(-L)}:=((\exp(-l_{ij})),$$
where $\mbox{L}$ is as in \eqref{eqn:ot_cost}. Note that, the likelihood associated with a permutation matrix $H\in\mbox{U}(\mathbf{1}_n, \mathbf{1}_n)$ is
$$\mbox{P}(H) = (1/\zeta) \prod_{i,j} \omega_{ij}^{h_{ij}},\quad \zeta = \sum_{H\in\mbox{U}(\mathbf{1}_n, \mathbf{1}_n)}\ \prod_{i,j} \omega_{ij}^{h_{ij}}.$$ Let  $\mbox{U}^{\prime}(\mathbf{1}_n, \mathbf{1}_n) = \{H\in\mbox{U}(\mathbf{r}, \mathbf{c}):\ P(H)>0\}$ denote the subset of matrices in $\mbox{U}(\mathbf{1}_n, \mathbf{1}_n)$ with positive probability. Then, for  $H_1,H_2\in\mbox{U}^{\prime}(\mathbf{1}_n, \mathbf{1}_n)$,  the relative probability of the two observed matrices is
$$\frac{\mbox{P}(H_1)}{\mbox{P}(H_2)} = \frac{\prod_{\{i,j: h_{1, ij}=1, h_{2, ij}=0\}} \omega_{ij}^{h_{1, ij}}}{  \prod_{\{i,j: h_{1, ij}=0, h_{2, ij}=1\}} \   \omega_{ij}^{h_{2, ij}}}.$$ Further, we note that the matrices
\[
\begin{pmatrix}
1 & 0 \\
0 & 1
\end{pmatrix}
\quad \text{and} \quad
\begin{pmatrix}
0 & 1 \\
1 & 0
\end{pmatrix}
\]
are referred to as checker-board matrices.  With these notations, we adapt \cite{wang2020fast, e26010063, chakraborty2023fairclusteringhierarchicalfairdirichlet}  and proceed as follows.
\begin{itemize}
    \item \textbf{Initialization.} At iteration $t=0$, we set the initial permutation matrix $A_0 $ to $ \mbox{B}_{\rm opt}$, as computed in \eqref{eqn:opt_B}. Let the total number of iterations be $T$. 
    
    \item \textbf{Proposal.} Increase $t\to t+1$. 
    \begin{itemize}
        \item Choose one row and one column $(r_1,c_1)$ uniformly at random.

        \item  \emph{If} $A_{t-1}(r_1,c_1) = 1$, choose a column $c_2$ at random among all the $0$ entries in $r_1$, and a row $r_2$ at random among all the $1$ entries in $c_2$. \emph{Else} choose a row $r_2$ at random among all the 1 entries in $c_1$, and a column $c_2$ at random among all the $0$ entries in $r_2$. 

        \item \emph{If} the sub-matrix extracted from $r_1, r_2, c_1, c_2$ is a checkerboard unit -- obtain $B_{t}$ from $A_{t-1}$ by swapping the checker-board.
    \end{itemize}
    
    \item\textbf{Acceptance/rejection.}
    \begin{itemize}
        \item Calculate $p_{t}=\mbox{P}(B_{t})/[\mbox{P}(B_{t}) + \mbox{P}(A_{t-1})]$.

        \item Draw $r_t\sim \mbox{Bernoulli}(p_{t})$, and  \emph{If} $r_t = 1$, then set $A_{t}=B_{t}$; \emph{else} set $A_{t}=A_{t-1}$.

        \item \emph{If} the sub-matrix extracted from $r_1, r_2, c_1, c_2$ is \emph{not} a checkerboard unit, set $A_{t}=A_{t-1}$. 
    \end{itemize}
 \end{itemize}
 By construction, the Markov chain above  converges to the correct stationary distribution, as the number of iterations $T\to\infty$. This  describes the scheme to sample from the full conditional distribution of $[\Pi\mid\beta,\sigma^2]$, and completes the MCMC algorithm to sample from the joint posterior of $[\beta, \sigma^2, \Pi]$ in \eqref{eqn:fracpost}. 

\subsection{Alternative problem formulation}\label{ssec:alternative}
One may consider the following alternative formulation  the problem of linear regression with sparsely permuted data \cite{slawski2019},
\begin{align}\label{eqn:alt_mod1}
&\mathbf{y} = \Pi X\beta +\varepsilon\quad \equiv\quad \mathbf{y} = X\beta + \mathbf{f} + \varepsilon,
\end{align}
where $\mathbf{f}=(f_1,\ldots, f_n)^T=(\Pi -I)X\beta$ is a such that 
\[
f_i =
\begin{cases}
= 0 & \text{for no permutation}, \\
\neq 0 & \text{for permutation},
\end{cases}
\]
for $i\in[n]$. Note that, $\mathbf{f}$  is a $n\times1$ sparse vector with $k(<<n)$ non-zero elements. For ease of inference, we may consider a relaxation  appealing to the rich literature on sparse Bayesian learning based on spike-slab priors (\cite{mitchell1988bayesian, george1993variable, ishwaran2005spike, narisetty2014bayesian}), and formulate the problem as follows
\begin{align}\label{eqn:alt_mod2}
&\mathbf{y} = X\beta + \mathbf{f} + \varepsilon,\notag\\
& f_i\overset{\text{i.i.d}}{\sim}\ \frac{k}{n}\mbox{N}(0,\ \sigma^2_{f}) + \bigg(1 - \frac{k}{n}\bigg) \delta_0, \ i\in[n],
\end{align}
where $\delta_0$ is the Dirac delta measure at $0$. One may alternatively consider a continuous shrinkage prior on $\mathbf{f}$ (\cite{park2008bayesian,polson2014bayesian,carvalho2010horseshoe, griffin2010inference, armagan2013generalized, bhattacharya2015dirichlet}) to carry out inference, but we stick to spike slab priors to ensure brevity of presentation.

We note that the formulation in \eqref{eqn:alt_mod2} suffer from two key disadvantages, compared to the model and prior specification in Section \ref{ssec:main_model}. First, the formulation in \eqref{eqn:alt_mod2} is approximate, since we do not take into account the exact definition of $f$ and merely pose it as a $k$-sparse vector. Secondly,  the estimated $\hat{f}$ would only yield the indices of non correspondence, but we cannot recover the estimated permutation matrix $\hat{\Pi}$ from $\hat{f}$. Consequently, the practical appeal of the formulation in \eqref{eqn:alt_mod2}, compared \eqref{eqn:alt_mod1} is reduced, and we do not pursue it further in our numerical studies. However, we theoretically study it in the sequel and establish novel posterior contraction results.  


\section{Theoretical guarantees}
In Bayesian inference, determining posterior contraction rates is crucial to measure how rapidly the posterior distribution converges within a very small neighborhood of the true model as the sample size diverges to infinity. The seminal papers (\cite{Ghoshal2000, Ghoshal2007}) extensively studied the posterior contraction phenomenon, assuming the observations are  i.i.d, under both well-specified and mis-specified setup.  \cite{GhoshnVart2007} presented posterior contraction results in the case of non-i.i.d. observations. More recently, \cite{Bhattacharya2019} introduced conditions for  posterior contraction  in fractional posteriors,  encompassing both well-specified and mis-specified models. In particular, \cite{Bhattacharya2019} demonstrated that the contraction rate of the fractional posteriors solely depend  on the prior mass concentrated within a specific Kullback-Liebler (K--L) neighborhood of the true data generating mechanism. This result is indeed crucial for deriving the contraction rates for fractional posteriors under a wide range of prior specification. The primary advantage of utilizing the conditions for  contraction of the fractional posteriors \cite{Bhattacharya2019}, over the conditions delineated in \cite{GhoshnVart2007} for standard posteriors, is that the latter requires  careful construction of sieves within the parameter space, and the prior mass condition alone is not sufficient to guarantee posterior contraction.  Readers are referred to \cite{Ghoshal2000, Ghoshal2007} for further details on sieve construction and general strategies for deriving posterior contraction results for standard Bayesian posterior. In this section, for the linear regression model with sparsely permuted data, we work with the fractional posterior introduced in \eqref{eqn:fracpost} and   derive novel posterior contraction results. 

To that end, few notations are in order. For a sequence of observations \(W^{(n)}\), suppose we posit a model  \(\mathbb{P}^{(n)}_\theta\) that admits the density \(p_\theta^{(n)}\), and $\theta \in \Theta$ is the parameter of interest.
Let  \(\Pi_n\) be a prior distribution on \(\theta \in \Theta\), and $L_{n, \alpha}(\theta) = \left[ p_\theta^{(n)}(W^{(n)}) \right]^\alpha$ be the fractional likelihood of order \(\alpha \in (0, 1)\),
where \(p_\theta^{(n)}(W^{(n)})\) is the standard likelihood.
For any measurable set $B$ belonging to a $\sigma$-field $\mathcal{B}$, the fractional posterior distribution \(\Pi_{n, \alpha}(\cdot)\) is then defined as
\begin{align*}
\Pi_{n, \alpha}(B \mid W^{(n)}) 
= \frac{\int_{B} L_{n, \alpha}(\theta) \, \Pi_n(d\theta)}{\int_{\Theta} L_{n, \alpha}(\theta) \, \Pi_n(d\theta)}.
\end{align*}
Further, suppose $\theta_0$ is the true parameter value, and $D\big( p_{\theta_0}^{(n)}, p_{\theta}^{(n)}\big)$ is the K--L divergence between $p_{\theta_0}^{(n)}$ and $p_{\theta}^{(n)}$. Let \(\theta^*\) be the parameter value that minimizes $D\big( p_{\theta_0}^{(n)}, p_{\theta}^{(n)}\big)$, that is,
\begin{align}\label{eq:KL_min}
\theta^\ast :\,=\arg\min_{\theta\in\Theta} D\big( p_{\theta_0}^{(n)}, p_{\theta}^{(n)}\big).
\end{align}
 In the well specified cases, that is when $\theta_0\in\Theta$, we have $\theta_0 = \theta^*$.
Finally, we introduce the \(\alpha\)-divergence between \(p^{(n)}_{\theta}\) and \(p^{(n)}_{\theta^\ast}\) with respect to \(p^{(n)}_{\theta_0}\), given by
$$D^{(n)}_{\theta_0, \alpha}(\theta, \theta^\ast) := \frac{1}{\alpha - 1} \log A^{(n)}_{\theta_0,\alpha}(\theta, \theta^\ast),$$
where
$$A^{(n)}_{\theta_0,\alpha}(\theta, \theta^\ast) := \int \left(\frac{p^{(n)}_{\theta}}{p^{(n)}_{\theta^\ast}}\right)^\alpha p^{(n)}_{\theta_0} \, d\mu^{(n)}.$$
 Then, for any given \(\theta'\), we define a K--L neighborhood  with radius \(\varepsilon\) centered around \(\theta'\), denoted by $N_{n}(\theta', \varepsilon; \theta_0)=$
{\footnotesize
\begin{align*}
\left\{\theta \in \Theta : \mbox{E}_{p_{\theta_0}^{(n)}}\bigg[\log \frac{p_{\theta'}^{(n)}}{p_{\theta}^{(n)}})\bigg]\leq n \varepsilon^2, \right.
 \left. \mbox{E}_{p_{\theta_0}^{(n)}}\bigg[\log^2 \frac{p_{\theta'}^{(n)}}{p_{\theta}^{(n)}}\bigg]\leq n \varepsilon^2 \right\},
\end{align*}}
where, 
$\mbox{E}_{p_{\theta_0}^{(n)}}\bigg[\log \frac{p_{\theta'}^{(n)}}{p_{\theta}^{(n)}}\bigg] = \int p_{\theta_0}^{(n)} \log \left(\frac{p_{\theta'}^{(n)}}{p_{\theta}^{(n)}}\right) d\mu^{(n)},$ and 
$\mbox{E}_{p_{\theta_0}^{(n)}}\bigg[\log^2 \frac{p_{\theta'}^{(n)}}{p_{\theta}^{(n)}}\bigg] = \int p_{\theta_0}^{(n)} \log^2 \left(\frac{p_{\theta'}^{(n)}}{p_{\theta}^{(n)}}\right) d\mu^{(n)}$. With the above notations, \cite{Bhattacharya2019} presented  the \emph{prior mass condition} to establish the posterior contraction rate of a fractional posterior.

\begin{theorem}[\cite{Bhattacharya2019}] \label{Thm:contraction}
Fix $\alpha \in (0,1)$, and consider $\theta^*$ as in \eqref{eq:KL_min}. Assume that, the sequence \(\varepsilon_n\) satisfies \(n\, \varepsilon_n^2 \geq 2\), and the prior mass condition 
\begin{align}\label{eq:prior_Mass}
\Pi_n\big(N_{n}(\theta', \varepsilon; \theta_0)\big) \geq e^{-n\,\varepsilon_n^2}
\end{align}
holds.
Then, for any \(D \geq 2\) and \(t > 0\),
\begin{align*}
\Pi_{n,\alpha} \left(\frac{1}{n}D^{(n)}_{\theta_0,\alpha}(\theta,\,\theta^\ast) \geq \frac{D + 3t}{1-\alpha}\, \varepsilon_n^2 \ \Bigg| \, W^{(n)}\right) \leq e^{-t\, n\, \varepsilon_n^2}
\end{align*}
holds with \(\mathbb{P}^{(n)}_{\theta_0}\) probability at least \(1 - 2/(D-1+t)^2 n \varepsilon_n^2\).
\end{theorem}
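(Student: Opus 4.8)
The plan is to follow the standard evidence-lower-bound versus change-of-measure dissection of the fractional posterior, specialized to the tempered likelihood. Throughout I take $\theta' = \theta^\ast$ in the prior mass condition (the statement being slightly loose on this point) and work with the normalized fractional likelihood ratio $L_{n,\alpha}(\theta)/L_{n,\alpha}(\theta^\ast) = \big(p_\theta^{(n)}(W^{(n)})/p_{\theta^\ast}^{(n)}(W^{(n)})\big)^\alpha$, which leaves $\Pi_{n,\alpha}$ unchanged. For measurable $B$ write $\mathcal{I}_B := \int_B \big(p_\theta^{(n)}/p_{\theta^\ast}^{(n)}\big)^\alpha\,\Pi_n(d\theta)$ and $U_n := \big\{\theta\in\Theta:\ \tfrac1n D^{(n)}_{\theta_0,\alpha}(\theta,\theta^\ast)\geq \tfrac{D+3t}{1-\alpha}\varepsilon_n^2\big\}$, so that the object to be bounded is $\Pi_{n,\alpha}(U_n\mid W^{(n)}) = \mathcal{I}_{U_n}/\mathcal{I}_\Theta$. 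I will upper-bound $\mathcal{I}_{U_n}$ by Markov's inequality and lower-bound $\mathcal{I}_\Theta$ by an evidence bound driven by the prior mass condition, then take the ratio.

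\emph{Numerator.} By Fubini, $\mathbb{E}_{\theta_0}^{(n)}[\mathcal{I}_{U_n}] = \int_{U_n}\int \big(p_\theta^{(n)}/p_{\theta^\ast}^{(n)}\big)^\alpha p_{\theta_0}^{(n)}\,d\mu^{(n)}\,\Pi_n(d\theta) = \int_{U_n} A^{(n)}_{\theta_0,\alpha}(\theta,\theta^\ast)\,\Pi_n(d\theta)$, and by definition $A^{(n)}_{\theta_0,\alpha}(\theta,\theta^\ast) = \exp\{-(1-\alpha)D^{(n)}_{\theta_0,\alpha}(\theta,\theta^\ast)\}\leq e^{-(D+3t)n\varepsilon_n^2}$ on $U_n$, whence $\mathbb{E}_{\theta_0}^{(n)}[\mathcal{I}_{U_n}]\leq e^{-(D+3t)n\varepsilon_n^2}$. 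Markov's inequality then yields, for any slack $a>0$, that $\mathcal{I}_{U_n}\leq e^{-(D+3t-a)n\varepsilon_n^2}$ outside an event of $\mathbb{P}_{\theta_0}^{(n)}$-probability at most $e^{-a n\varepsilon_n^2}$.

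\emph{Denominator (the crux).} Restrict $\mathcal{I}_\Theta$ to the K--L neighborhood $\mathcal{K}_n := N_n(\theta^\ast,\varepsilon_n;\theta_0)$ and let $\bar\nu := \Pi_n(\cdot\cap\mathcal{K}_n)/\Pi_n(\mathcal{K}_n)$ be the prior renormalized there. Jensen's inequality for the concave function $\log$ gives $\log \mathcal{I}_\Theta \geq \log \Pi_n(\mathcal{K}_n) + \alpha\int_{\mathcal{K}_n}\log\big(p_\theta^{(n)}/p_{\theta^\ast}^{(n)}\big)\,\bar\nu(d\theta) = \log\Pi_n(\mathcal{K}_n) - \alpha\bar Z$, where $\bar Z := \int_{\mathcal{K}_n}\log\big(p_{\theta^\ast}^{(n)}/p_\theta^{(n)}\big)\,\bar\nu(d\theta)$ is a data-dependent random variable. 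By Fubini, $\mathbb{E}_{\theta_0}^{(n)}[\bar Z]\leq n\varepsilon_n^2$ from the first part of the definition of $\mathcal{K}_n$, and by Jensen for $(\cdot)^2$ followed by Fubini, $\mathrm{Var}_{\theta_0}^{(n)}(\bar Z)\leq \mathbb{E}_{\theta_0}^{(n)}[\bar Z^2]\leq \int_{\mathcal{K}_n}\mathbb{E}_{\theta_0}^{(n)}\big[\log^2(p_{\theta^\ast}^{(n)}/p_\theta^{(n)})\big]\,\bar\nu(d\theta)\leq n\varepsilon_n^2$ from the second part. Chebyshev's inequality then gives $\bar Z\leq(1+\delta)n\varepsilon_n^2$ outside an event of probability at most $1/(\delta^2 n\varepsilon_n^2)$, and combining with the prior mass condition $\log\Pi_n(\mathcal{K}_n)\geq -n\varepsilon_n^2$ and $\alpha<1$ one obtains $\mathcal{I}_\Theta\geq e^{-(2+\delta)n\varepsilon_n^2}$ on that event.

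\emph{Conclusion.} On the intersection of the two good events, $\Pi_{n,\alpha}(U_n\mid W^{(n)}) = \mathcal{I}_{U_n}/\mathcal{I}_\Theta \leq e^{-(D+3t-a-2-\delta)n\varepsilon_n^2}$; since $D\geq 2$ one can choose $a,\delta>0$ with $a+\delta\leq D+2t-2$, making the exponent at most $-t n\varepsilon_n^2$, which is the claimed bound, and a union bound over the two failure events --- the polynomial term $1/(\delta^2 n\varepsilon_n^2)$ dominating the exponential term $e^{-a n\varepsilon_n^2}$ --- gives the stated probability $1-2/((D-1+t)^2 n\varepsilon_n^2)$ once the deviation budget is split so that the admissible fluctuation in $\bar Z$ scales like $(D-1+t)$. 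The step I expect to be hardest is the denominator bound: one has only first- and second-moment control of the log-likelihood ratio, yet needs a \emph{high-probability} lower bound on an integral of exponentials, and the Jensen--Fubini--Chebyshev chain delivering it is delicate --- it is essential that $\mathcal{K}_n$ be centered at the K--L minimizer $\theta^\ast$, so that the log-ratio surfacing after Jensen is exactly the one whose two moments are controlled by the definition of $\mathcal{K}_n$, and the accounting of the pivots $\theta_0$ and $\theta^\ast$ (in particular the cancellation of the constant minimal divergence $D(p_{\theta_0}^{(n)},p_{\theta^\ast}^{(n)})$ that is built into the use of $\theta^\ast$) must be carried through carefully.
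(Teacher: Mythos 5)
You should first know that the paper contains no proof of this statement: Theorem~\ref{Thm:contraction} is imported verbatim from the cited reference \cite{Bhattacharya2019}, so the only meaningful comparison is with that reference's argument, which your proposal essentially reconstructs. Your two central estimates are the right ones and are correctly executed: the numerator bound via Fubini together with the identity $A^{(n)}_{\theta_0,\alpha}(\theta,\theta^\ast)=\exp\{-(1-\alpha)D^{(n)}_{\theta_0,\alpha}(\theta,\theta^\ast)\}$ and Markov's inequality, and the evidence lower bound obtained by restricting to the K--L neighborhood centered at $\theta^\ast$ (your reading $\theta'=\theta^\ast$ is the intended one), applying Jensen to the log, and controlling the resulting $\bar Z$ through its first two moments and Chebyshev. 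That is the substance of the cited proof.

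The genuine gap is in your final bookkeeping, which as written cannot produce the constants actually claimed. Your constraints are $a+\delta\le D+2t-2$ (needed for the $e^{-tn\varepsilon_n^2}$ bound after you discard the factor $\alpha$ via $e^{-n\varepsilon_n^2-\alpha(1+\delta)n\varepsilon_n^2}\ge e^{-(2+\delta)n\varepsilon_n^2}$), while matching the stated probability forces $\delta$ of size $D-1+t$ and $e^{-a n\varepsilon_n^2}\le 1/\{(D-1+t)^2 n\varepsilon_n^2\}$. But $\delta=D-1+t$ leaves $a\le t-1$, which is impossible for any $t\le 1$, and even for $t$ moderately above $1$ the exponential failure term is not dominated by the polynomial one at the minimal scale $n\varepsilon_n^2\ge 2$; so your union bound yields the conclusion only with a strictly worse probability guarantee (or for a restricted range of $t$), not $1-2/\{(D-1+t)^2 n\varepsilon_n^2\}$ for all $D\ge 2$, $t>0$, $\alpha\in(0,1)$. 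Two symptoms point at the same problem: you never invoke the hypothesis $n\varepsilon_n^2\ge 2$, which in the reference is precisely the device (via $e^{u}\ge u^2/2$) that allows an exponentially small Markov failure probability with exponent of order $(D-1+t)n\varepsilon_n^2$ to be absorbed into the Chebyshev term $1/\{(D-1+t)^2 n\varepsilon_n^2\}$; and dropping the $\alpha$-discount in the denominator wastes exactly the slack that the sharper allocation exploits. The Jensen--Fubini--Chebyshev step you flagged as the crux is in fact fine; what is missing is the delicate allocation of the deviation budget (keeping the factor $\alpha$, taking the Chebyshev allowance for $\bar Z$ to be $(D-1+t)n\varepsilon_n^2$ so that the denominator event already carries the advertised probability, and choosing the numerator threshold so its Markov failure is controlled using $n\varepsilon_n^2\ge2$). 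As it stands, your argument proves a contraction statement of the same shape with weaker constants, not the quoted theorem; to claim the exact statement you should either carry out that finer accounting or cite \cite{Bhattacharya2019} for it, as the paper does.
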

With the necessary preliminary background in place, we now proceed to derive the main results within our proposed framework.

\subsection{Posterior contraction in the proposed formulation in \eqref{ssec:main_model}}
We shall denote  \(\theta = (\Pi, \beta)\), and  \(\theta_0 = (\Pi_0, \beta_0)\) represents the true parameter values. Since, we are operating under a well-specified set up, we have \(\theta^* = \theta_0\).  Let \(W_i = (y_i, \mathbf{x}_i)\) denote the \(i\)-th data unit, and \(W^{(n)} = (W_1, W_2, \ldots, W_n)\) represent the entire dataset. Utilizing this notation, we proceed to state and prove the prior mass condition in Theorem \ref{prior_mass_cond}, which is essential for establishing the posterior contraction rate of the proposed method in \eqref{ssec:main_model}.

\begin{theorem}\label{prior_mass_cond}
Assume $\sigma^2 = 1$. For  $\alpha \in (0,1)$, define the neighbourhood 
{\small
$$\mathscr{A} = \left\{ \theta = (\Pi, \beta) : \|\Pi - \Pi_0\|_F^2 \leq \delta_\pi, \|\beta - \beta_0\|_2^2 \leq \delta_\beta \right\},$$}
where 
\begin{align*}
   \delta_\Pi =  \frac{2\epsilon^2_{n, \Pi}}{C\alpha}, \quad\text{and}\quad 
  \delta_\beta = \frac{2\epsilon^2_{n, \beta}}{n},
\end{align*}
and $||\beta_0||_2^2= C$.    
Further, suppose \(\epsilon_n\) satisfies
$$\epsilon^2_n =  \frac{2 (\epsilon_{n, \beta}^2 + \epsilon_{n, \Pi}^2)}{\alpha},$$ such that $\epsilon^2_{n, \Pi} = (\log k + k\log n)/n$, \quad and $\epsilon^2_{n, \beta} = 2/n$.
Then, we have
\begin{equation*}
        \pi_\theta(\mathscr{A}) \geq \exp\left(-n \epsilon_n^2\right),
\end{equation*}
where \(\pi_\theta\) denotes the joint prior distribution on \(\theta = (\Pi, \beta)\).
\end{theorem}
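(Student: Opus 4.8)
# Proof Proposal

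The plan is to lower-bound the prior mass $\pi_\theta(\mathscr{A})$ by exploiting the product structure of the prior $\pi_\theta = \pi(\Pi)\,\pi(\beta)$ and treating the two factors separately. Since the prior on $(\Pi,\beta)$ in \eqref{eqn:model3}--\eqref{eqn:model4} is a product measure (with $\sigma^2$ fixed to $1$), we have
\begin{align*}
\pi_\theta(\mathscr{A}) = \pi\big(\{\Pi : \|\Pi - \Pi_0\|_F^2 \leq \delta_\Pi\}\big)\cdot \pi\big(\{\beta : \|\beta - \beta_0\|_2^2 \leq \delta_\beta\}\big),
\end{align*}
so it suffices to bound each factor below by $\exp(-n\epsilon_{n,\Pi}^2)$ and $\exp(-n\epsilon_{n,\beta}^2)$ respectively, and then multiply, using $\epsilon_n^2 = 2(\epsilon_{n,\beta}^2 + \epsilon_{n,\Pi}^2)/\alpha \geq \epsilon_{n,\Pi}^2 + \epsilon_{n,\beta}^2$.

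For the permutation factor, I would first observe that $\|\Pi - \Pi_0\|_F^2 = 2\,d_H(\Pi,\Pi_0)$, so the Frobenius ball of radius $\delta_\Pi$ contains at least the singleton $\{\Pi_0\}$ (and this is all we need if $\delta_\Pi < 2$, which holds for $n$ large). Hence the permutation factor is at least $\pi(\Pi_0) = 1/|\mathcal{P}_{n,a}|$ under the uniform prior $\mathcal{U}(\mathcal{P}_{n,a})$. The key combinatorial estimate is then an upper bound on the cardinality $|\mathcal{P}_{n,k}|$: a $k$-sparse permutation is determined by choosing which $\le k$ indices are displaced (at most $\binom{n}{k} \le n^k$ ways, or $\sum_{j\le k}\binom{n}{j}$) and a derangement on that support (at most $k!$ ways), giving $|\mathcal{P}_{n,k}| \le k!\binom{n}{k} \le k\,n^k$ up to constants, so that $\log|\mathcal{P}_{n,k}| \lesssim \log k + k\log n = n\,\epsilon_{n,\Pi}^2$. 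This yields $\pi(\{\Pi_0\}) \ge \exp(-n\epsilon_{n,\Pi}^2)$ up to the constants absorbed in the definition of $\epsilon_{n,\Pi}^2$.

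For the $\beta$ factor, the prior is $\mathrm{N}_d(0, 1000\,I_d)$, so I would lower-bound the Gaussian mass of a small ball $\{\|\beta - \beta_0\|_2^2 \le \delta_\beta\}$ of radius $\sqrt{\delta_\beta} = \sqrt{2}\,\epsilon_{n,\beta}/\sqrt{n}$ centered at $\beta_0$ with $\|\beta_0\|_2^2 = C$. Using the standard bound $\pi(\|\beta-\beta_0\|_2 \le \rho) \ge (\text{vol of ball})\cdot \inf_{\|\beta-\beta_0\|\le\rho} \varphi_{1000 I_d}(\beta) \gtrsim \rho^d \exp(-\|\beta_0\|_2^2/(2000) - o(1))$, one checks that for the stated $\epsilon_{n,\beta}^2 = 2/n$ the exponent is dominated by $n\epsilon_{n,\beta}^2$ for $n$ large (the $\rho^d = (c/\sqrt{n})^d$ term contributes only $\tfrac{d}{2}\log n$, which is $o(n\epsilon_{n,\beta}^2)$ once we are generous with constants, and the constant $C$ term is $O(1)$). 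Combining the two factors gives $\pi_\theta(\mathscr{A}) \ge \exp(-n\epsilon_{n,\Pi}^2)\exp(-n\epsilon_{n,\beta}^2) \ge \exp(-n\epsilon_n^2)$.

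The main obstacle I anticipate is bookkeeping rather than conceptual: making the constants line up so that the logarithmic volume term $\tfrac{d}{2}\log n$ from the Gaussian small-ball bound and the $\log k$ term from the combinatorial count are genuinely absorbed into $n\epsilon_{n,\Pi}^2$ and $n\epsilon_{n,\beta}^2$ as defined — this is why the statement carries the slack factor of $2/\alpha$ in $\epsilon_n^2$ and the factor-$2$'s in $\delta_\Pi,\delta_\beta$. A secondary point to verify carefully is that the $\mathscr{A}$ in the statement is indeed contained in (or has mass comparable to) the genuine K--L neighborhood $N_n(\theta_0,\epsilon_n;\theta_0)$ used in Theorem \ref{Thm:contraction}; since $\sigma^2=1$ the log-likelihood ratio is a quadratic in the residuals, and a short computation shows $\mathrm{E}_{p_{\theta_0}}[\log(p_{\theta_0}/p_\theta)] \lesssim \|\Pi X\beta - \Pi_0 X\beta_0\|_2^2 \lesssim \|X\|_{op}^2(\delta_\beta + C\delta_\Pi)$, which is where the scaling $\delta_\Pi \asymp \epsilon_{n,\Pi}^2/(C\alpha)$ and $\delta_\beta \asymp \epsilon_{n,\beta}^2/n$ comes from — so this consistency check is really the justification for the particular radii chosen.
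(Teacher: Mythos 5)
Your proposal follows essentially the same route as the paper's proof: factor the prior, lower-bound the permutation mass by the point mass at $\Pi_0$ using $|\mathcal{P}_{n,k}|\lesssim k\,n^{k}$ (so $\log|\mathcal{P}_{n,k}|\lesssim \log k + k\log n = n\epsilon_{n,\Pi}^2$), lower-bound the Gaussian small-ball probability around $\beta_0$, and absorb the slack through the factor $2/\alpha$ in $\epsilon_n^2$, with the choice of radii justified exactly as you indicate, namely by showing $\mathscr{A}$ sits inside a R\'enyi-divergence neighborhood $\mathscr{A}'$ via $\|\Pi_0 X\beta_0-\Pi X\beta\|_2^2\le nC\|\Pi-\Pi_0\|_F^2+n^2\|\beta-\beta_0\|_2^2$. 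The only cosmetic differences are that the paper bounds the Gaussian ball coordinatewise with the mean value theorem rather than by volume times infimum of the density, and your absorption of the $\tfrac{d}{2}\log n$ term into the exponent is no more (and no less) carefully justified than the paper's own step, which derives $\exp(-K'p\log n)$ and then simply sets $\epsilon_{n,\beta}^2=2/n$.
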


\begin{proof}
For simplicity of exposition, we assume $\sigma^2 = 1$. Then, the $\alpha$-Rényi divergence between \( P_\theta \) and the true data generating mechanism \( P_{\theta_0} \) under the proposed framework is as follows
\begin{align*}
D^{(n)}_{\alpha, \theta_0}(P_\theta, P_{\theta_0}) 
= \frac{\alpha}{2} \left\| \Pi_0 X \beta_0 - \Pi X \beta \right\|_2^2.
\end{align*}
Further, we define the set \(\mathscr{A}'\) as:
{\footnotesize
\begin{align}\label{eqn:proof_1}
\mathscr{A}' = \bigg\{\theta = (\Pi, \beta) : \left\| \Pi_0 X \beta_0 - \Pi X \beta \right\|_2^2 \leq \frac{n (\epsilon_{n, \beta}^2 + \epsilon_{n, \Pi}^2)}{\frac{\alpha}{2}}\bigg\}.
\end{align}}
Suppose we assume \(\|X\|_{2}^2  = n\). One can achieve this via scaling the design matrix $X$ appropriately. Then, one can simplify the expression of $D^{(n)}_{\alpha, \theta_0}(P_\theta, P_{\theta_0})$ in \eqref{eqn:proof_1}, up to the constant $\alpha/2$, as follows
\begin{align}\label{eqn:proof_2}
    &\quad\left\| \Pi_0 X \beta_0 - \Pi X \beta \right\|_2^2 \notag\\
    & = \left\| \Pi_0 X \beta_0 - \Pi X \beta_0\ + \ \Pi X \beta_0 - \Pi X \beta \right\|_2^2 \nonumber \\
    & \leq \left\| \Pi_0 X \beta_0 - \Pi X \beta_0 \right\|_2^2\ +\ \left\|\Pi X \beta_0 - \Pi X \beta \right\|_2^2 \nonumber \\
    & \leq n\ \|\Pi - \Pi_0\|_F^2\ \|\beta_0\|_2^2\ +\ n\ \|\Pi\|_F^2\ \|\beta - \beta_0\|_2^2 \nonumber \\
    & \leq n\ \|\Pi - \Pi_0\|_F^2\ \|\beta_0\|_2^2\  +\ n^2\  \|\beta - \beta_0\|_2^2 \nonumber \\
    & = n\ C\ \|\Pi - \Pi_0\|_F^2\ +\ n^2\ \|\beta - \beta_0\|_2^2.
\end{align}
The first inequality above follows from triangle inequality. Further, note that for any permutation matrix $\Pi = ((\pi_{i, j}))$ or order $n$, we have $||\Pi||^2_{2} = \sum_{i=1}^n\sum_{j=1}^n \pi^2_{i, j} = n$.
Rest of the simplification is complete by noting that $ \|\beta_0\|_2^2 = C$ and \(\|X\|_{2}^2  = n\).

From the simplifications in \eqref{eqn:proof_2}, since $\mathscr{A}\subset\mathscr{A'}$, one can write
\begin{align}\label{pm_main}
    &\quad\pi_\theta(\mathscr{A'}) \geq \pi_\Pi\left( \Pi: \|\Pi - \Pi_0\|_F^2 \leq \delta_\Pi \right)  + \notag\\
    &\quad \pi_\beta\left( \beta: \|\beta - \beta_0\|_2^2 \leq \delta_{\beta} \right) 
    \geq \pi_\theta(\mathscr{A}),
\end{align}
where $\delta_\Pi =  \frac{2\epsilon^2_{n, \Pi}}{C\alpha}$, and
$\delta_\beta = \frac{2\epsilon^2_{n, \beta}}{n}$. Next, we shall look into the two quantities one by one.

First, we consider 
\begin{align}\label{pm_beta}
    &\quad \pi_\Pi(||\Pi -\Pi_0||^2_{\rm F}\leq \delta_{\Pi})
    \geq\ \mbox{\rm P}(\Pi = \Pi_0),\notag\\
    & = \frac{1}{\binom{n}{1} + \binom{n}{2} + \binom{n}{k}}
    \geq\ \frac{1}{k\binom{n}{k}}\geq\ \frac{1}{kn^k},\notag\\
    & = \exp\bigg[-n \bigg(\frac{\log k + k\log n}{n}\bigg)\bigg].
\end{align}
Then, we set $\epsilon^2_{n, \Pi} = (\log k + k\log n)/n$, and we have 
\begin{align}\label{eqn:epsilon_pi}
    n\epsilon^2_{n, \Pi}\geq 2.
\end{align}

Next, we consider   \(\beta \sim N(0, \phi I_p)\) and simplify as follows
\begin{align}\label{pm_pi}
    &\quad \pi_\beta(||\beta -\beta_0||^2_{2}\leq \delta_{\beta})\notag\\
    & \geq \prod_{i=1}^p\pi\bigg(|\beta_{i} -\beta_{0, i}|\leq \sqrt{\frac{\delta_{\beta}}{p}}\bigg)
    = \bigg[\pi\bigg(|\beta_{1} -\beta_{0, 1}|\leq \sqrt{\frac{\delta_{\beta}}{p}}\bigg)\bigg]^p,\notag\\
    & = \bigg[\int_{|\beta_{1} -\beta_{0, 1}|\leq \sqrt{\delta_{\beta}/p}}\frac{1}{2\pi\phi}\exp-\bigg(\frac{\beta^2_1}{2\phi}\bigg)d\beta_1\bigg]^p,\notag\\
    & = \bigg[2M\ \sqrt{\frac{\delta_\beta}{p}}\bigg]^p
     = \exp(-K^{\prime}p\log n),
\end{align}
where $K^{\prime}$ is a constant. The first inequality holds by triangle inequality. The next two steps follow from the assumption $\beta \sim N(0, \phi I_p)$. The penultimate step follows from the mean value theorem \cite{stewart2015calculus}. Simple algebraic manipulations complete the proof. Then, we set $\epsilon^2_{n, \beta} = 2/n$, and we have 
\begin{align}\label{eqn:epsilon_beta}
    n\epsilon^2_{n, \beta}\geq 2.
\end{align}
From \eqref{eqn:epsilon_pi} and \eqref{eqn:epsilon_beta}, we have 
$\frac{n (\epsilon_{n, \beta}^2 + \epsilon_{n, \Pi}^2)}{\frac{\alpha}{2}} \geq 2,$ since $\alpha\in(0, 1)$.
Combining \eqref{pm_main}, \eqref{pm_beta} and \eqref{pm_pi}, we have
\begin{align*}
\pi_\theta(\mathscr{A})
&\geq\pi_\Pi\left(\|\Pi - \Pi_0\|_F^2 \leq \delta_\Pi\right) \pi_\beta\left(\|\beta - \beta_0\|_2^2 \leq \delta_\beta\right) \nonumber \\
&\geq \exp\bigg(-n\frac{\epsilon_{n, \beta}^2 + \epsilon_{n, \Pi}^2}{\frac{\alpha}{2}}\bigg).
\end{align*}
Hence, we have the proof for the prior mass condition, under the proposed framework.
\end{proof}

\begin{corollary}\label{post_contract}
    Assume \( n\epsilon_n^2 \geq 2 \), where $\epsilon_n$ is defined in Theorem \eqref{prior_mass_cond}. Let $\theta = (\Pi, \beta)$, and $\theta_0 = (\Pi_0, \beta_0)$ represents the true parameter values.
    Then, given \( \alpha \in (0, 1) \), for any \( D \geq 2 \) and \( t > 0 \), the following holds
        $\Pi_{n, \alpha}\left(\frac{1}{n}D^{(n)}_{\theta_0, \alpha}(\theta, \theta_0) \geq \frac{D + 3t}{1 - \alpha} \epsilon_n^2 \bigg| W^{(n)}\right) \leq \exp(-tn\epsilon_n^2)$
    with probability at least \( 1 - 2/(D - 1 + t)^2 n \epsilon_n^2 \) under \( \mathbb{P}_{\theta_0}^{(n)} \).
\end{corollary}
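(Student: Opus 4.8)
The plan is to obtain Corollary \ref{post_contract} as a direct application of Theorem \ref{Thm:contraction} to the well-specified Gaussian model of Section \ref{ssec:main_model}: once the two hypotheses of that theorem are checked, its conclusion is verbatim the statement of the corollary. Since the model is correctly specified, $\theta^\ast = \theta_0$, so the divergence appearing in the conclusion is $D^{(n)}_{\theta_0,\alpha}(\theta,\theta_0)$; the requirement $n\epsilon_n^2 \geq 2$ is assumed outright (and is compatible with $n\epsilon_{n,\Pi}^2 \geq 2$ and $n\epsilon_{n,\beta}^2 \geq 2$ recorded in \eqref{eqn:epsilon_pi}--\eqref{eqn:epsilon_beta}). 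Thus the only genuine step is to verify the prior mass condition \eqref{eq:prior_Mass}, i.e. that $\pi_\theta\big(N_n(\theta_0,\epsilon_n;\theta_0)\big) \geq e^{-n\epsilon_n^2}$.

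The key is to show that the neighbourhood $\mathscr{A}$ of Theorem \ref{prior_mass_cond} is contained in the Kullback--Leibler neighbourhood $N_n(\theta_0,\epsilon_n;\theta_0)$ entering \eqref{eq:prior_Mass}; the prior mass bound then follows since $\pi_\theta(N_n) \geq \pi_\theta(\mathscr{A}) \geq e^{-n\epsilon_n^2}$ by Theorem \ref{prior_mass_cond}. With $\sigma^2 = 1$, set $v(\theta) = \Pi_0 X\beta_0 - \Pi X\beta$; since under $\mathbb{P}^{(n)}_{\theta_0}$ we have $\mathbf{y} = \Pi_0 X\beta_0 + \boldsymbol{\epsilon}$ with $\boldsymbol{\epsilon}\sim\mbox{N}_n(\mathbf{0},I_n)$, a one-line computation gives $\log\big(p_{\theta_0}^{(n)}/p_\theta^{(n)}\big) = \tfrac12\|v(\theta)\|_2^2 + \boldsymbol{\epsilon}^\top v(\theta)$, hence $\mbox{E}_{p_{\theta_0}^{(n)}}\big[\log(p_{\theta_0}^{(n)}/p_\theta^{(n)})\big] = \tfrac12\|v(\theta)\|_2^2$ and $\mbox{E}_{p_{\theta_0}^{(n)}}\big[\log^2(p_{\theta_0}^{(n)}/p_\theta^{(n)})\big] = \tfrac14\|v(\theta)\|_2^4 + \|v(\theta)\|_2^2$. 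On $\mathscr{A}$, the chain \eqref{eqn:proof_2} together with the values $\delta_\Pi = 2\epsilon_{n,\Pi}^2/(C\alpha)$, $\delta_\beta = 2\epsilon_{n,\beta}^2/n$ and $\epsilon_n^2 = 2(\epsilon_{n,\beta}^2+\epsilon_{n,\Pi}^2)/\alpha$ yields $\|v(\theta)\|_2^2 \leq n\epsilon_n^2$, which immediately controls the first moment. For the second moment I would additionally exploit that, once $\delta_\Pi < 2$ (true for large $n$ since $k \ll n$), the ball $\{\|\Pi-\Pi_0\|_F^2 \leq \delta_\Pi\}$ collapses to $\{\Pi = \Pi_0\}$, so $v(\theta) = \Pi_0 X(\beta_0-\beta)$ and an operator-norm bound gives $\|v(\theta)\|_2^2 \lesssim n\|\beta-\beta_0\|_2^2 \leq 2\epsilon_{n,\beta}^2$, of constant order, whence $\tfrac14\|v(\theta)\|_2^4 + \|v(\theta)\|_2^2 \leq n\epsilon_n^2$. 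This establishes $\mathscr{A}\subseteq N_n(\theta_0,\epsilon_n;\theta_0)$ and hence \eqref{eq:prior_Mass}.

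With both hypotheses in hand, Theorem \ref{Thm:contraction} applies directly and gives, for every $D\geq 2$ and $t>0$, $\Pi_{n,\alpha}\big(\tfrac1n D^{(n)}_{\theta_0,\alpha}(\theta,\theta_0) \geq \tfrac{D+3t}{1-\alpha}\epsilon_n^2 \mid W^{(n)}\big) \leq e^{-tn\epsilon_n^2}$ on an event of $\mathbb{P}^{(n)}_{\theta_0}$-probability at least $1 - 2/\big((D-1+t)^2 n\epsilon_n^2\big)$, which is exactly the claim. I expect the main obstacle to be precisely the second-moment condition in the definition of $N_n$: the crude Frobenius bound $\|\Pi\|_F^2 = n$ used in \eqref{eqn:proof_2} only delivers $\|v(\theta)\|_2^2 \lesssim n\epsilon_n^2$, which is too weak to absorb the quartic term $\tfrac14\|v(\theta)\|_2^4$, so one must either invoke the discreteness of the permutation support (forcing $\Pi=\Pi_0$ on the relevant event) or route the argument through the $\alpha$-Rényi divergence $D^{(n)}_{\alpha,\theta_0}$ computed in the proof of Theorem \ref{prior_mass_cond}, in which only $\|v(\theta)\|_2^2$ enters. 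Everything else is routine bookkeeping with the constants $C$, $\alpha$, $\epsilon_{n,\Pi}$, $\epsilon_{n,\beta}$.
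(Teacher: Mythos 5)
Your proposal is correct and follows the same route the paper intends: the corollary is a direct application of Theorem \ref{Thm:contraction} in the well-specified case ($\theta^\ast=\theta_0$), with the prior mass condition \eqref{eq:prior_Mass} supplied by Theorem \ref{prior_mass_cond}; the paper itself gives no separate proof of the corollary beyond this.

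Where you go beyond the paper is in actually verifying the bridging step that the paper leaves implicit, namely that the parameter-space neighbourhood $\mathscr{A}$ (whose prior mass Theorem \ref{prior_mass_cond} bounds) sits inside the Kullback--Leibler neighbourhood $N_n(\theta_0,\epsilon_n;\theta_0)$ that \eqref{eq:prior_Mass} is stated for. Your computation $\log(p^{(n)}_{\theta_0}/p^{(n)}_{\theta})=\tfrac12\|v(\theta)\|_2^2+\boldsymbol{\epsilon}^\top v(\theta)$ and the resulting first and second moments are correct, and you rightly observe that the bound $\|v(\theta)\|_2^2\le n\epsilon_n^2$ coming from \eqref{eqn:proof_2} controls only the first-moment condition, while the quartic term $\tfrac14\|v(\theta)\|_2^4$ in the second moment needs a sharper argument. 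Your fix is valid: since distinct permutation matrices satisfy $\|\Pi-\Pi_0\|_F^2\ge 4$, the ball $\{\|\Pi-\Pi_0\|_F^2\le\delta_\Pi\}$ reduces to $\{\Pi=\Pi_0\}$ once $\delta_\Pi<4$ (your threshold $\delta_\Pi<2$ is only more conservative), which holds for large $n$ at fixed $\alpha$ because $\delta_\Pi=2(\log k+k\log n)/(nC\alpha)\to 0$ when $k\log n=o(n)$; then $\|v(\theta)\|_2^2\le n\delta_\beta=2\epsilon_{n,\beta}^2=4/n$, so both moment conditions hold comfortably and $\mathscr{A}\subseteq N_n(\theta_0,\epsilon_n;\theta_0)$. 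In short, your argument matches the paper's approach but is more complete, since the paper's Theorem \ref{prior_mass_cond} only certifies mass of a set defined through the R\'enyi-type bound on $\|\Pi_0X\beta_0-\Pi X\beta\|_2^2$ and never addresses the second-moment requirement in the definition of $N_n$.
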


Next, we conduct a theoretical analysis of the alternative formulation of the linear regression with sparsely permuted data problem as presented in \eqref{eqn:alt_mod2}, and we establish novel results concerning posterior contraction.

\subsection{Posterior contraction in the alternative formulation in \ref{ssec:alternative}}
Suppose we define \(\theta = (\beta, \sigma^2_{f})\), and \(\theta_0 = (\beta_0, \sigma^2_{f0})\) represents the true parameter values. With that, we state and prove the prior mass condition in Theorem \ref{prior_mass_cond_alt}, which is essential for establishing the posterior contraction rate of the proposed method in \eqref{ssec:alternative}.

\begin{theorem}\label{prior_mass_cond_alt}
Assume $\sigma^2 = 1$. For  $\alpha \in (0,1)$, define the neighbourhood 
{\small
$$\mathscr{A} = \left\{ \theta = (\beta, \sigma^2_f) :\|\beta - \beta_0\|_2^2 \leq \delta_\beta, \bigg|\frac{1 +\sigma^2_{f0}}{1 +\sigma^2_{f}} - 1\bigg| \leq \delta_{\sigma^2_{f}}\right\},$$}
where 
$\delta_{\sigma^2_f} = \delta_{\sigma^2_{f}} =  2(1-\alpha)\epsilon^2_{n, \sigma^2_{f0}}$ and
  $\delta_\beta = \frac{2\epsilon^2_{n, \beta}}{n\alpha}$.
Further, suppose \(\epsilon_n\) satisfies
$\epsilon^2_n = \bigg[\frac{\epsilon_{n, \beta}^2}{\frac{\alpha}{2}} + \frac{\epsilon_{n, \sigma^2_f}^2}{\frac{1}{2(1-\alpha)}}\bigg]$,
such that $\epsilon^2_{n, \beta} = 2/n$, and $\epsilon^2_{n, \sigma^2_f} = 2/n$.
Then, we have $\pi_\theta(\mathscr{A}) \geq \exp\left(-n \epsilon_n^2\right)$,
where \(\pi_\theta\) denotes the joint prior distribution on \(\theta = (\beta, \sigma^2_f)\).
\end{theorem}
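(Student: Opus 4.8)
The plan is to mirror the structure of the proof of Theorem \ref{prior_mass_cond}, but now accounting for the fact that marginalizing out the spike-slab variable $\mathbf{f}$ turns the model into $\mathbf{y}\mid\beta,\sigma^2_f$ having independent coordinates that are mixtures; for the coordinates hit by the slab the marginal variance is $\sigma^2 + \sigma^2_f = 1+\sigma^2_f$, which explains the appearance of the ratio $(1+\sigma^2_{f0})/(1+\sigma^2_f)$ in the neighbourhood $\mathscr A$. So the first step is to write down the $\alpha$-R\'enyi divergence $D^{(n)}_{\alpha,\theta_0}(P_\theta,P_{\theta_0})$ for this marginalized Gaussian-mixture model. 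Since the coordinates are independent and each is (asymptotically, or on the non-spike event) Gaussian with mean $x_i^\top\beta$ and variance $1$ or $1+\sigma^2_f$, the divergence decomposes as a mean-discrepancy term controlled by $\|X(\beta-\beta_0)\|_2^2 \le n\|\beta-\beta_0\|_2^2$ (using $\|X\|_2^2=n$) plus a variance-discrepancy term that, for a Gaussian location-scale family, is governed by a function of the ratio $(1+\sigma^2_{f0})/(1+\sigma^2_f)$ — precisely the quantity bounded by $\delta_{\sigma^2_f}$. I would record the standard bound $D_\alpha\big(N(\mu_0,\tau_0^2),N(\mu,\tau^2)\big) \lesssim \alpha(\mu-\mu_0)^2/\tau^2 + \tfrac{1}{1-\alpha}\,h\big(\tau_0^2/\tau^2\big)$ for a suitable $h$ with $h(1)=0$, and argue that on $\mathscr A$ this is at most $n\,\epsilon_n^2$ with the stated $\epsilon_n^2 = \epsilon_{n,\beta}^2/(\alpha/2) + \epsilon_{n,\sigma^2_f}^2/(1/(2(1-\alpha)))$.

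The second step is the prior-mass lower bound, which now factors as $\pi_\theta(\mathscr A) \ge \pi_\beta\big(\|\beta-\beta_0\|_2^2\le\delta_\beta\big)\cdot\pi_{\sigma^2_f}\big(|(1+\sigma^2_{f0})/(1+\sigma^2_f)-1|\le\delta_{\sigma^2_f}\big)$ by independence of the priors on $\beta$ and $\sigma^2_f$. The $\beta$-factor is handled exactly as in \eqref{pm_pi}: coordinatewise, using $\beta\sim N(0,\phi I_p)$ and the mean value theorem, one gets a bound of the form $[2M\sqrt{\delta_\beta/p}]^p = \exp(-K'p\log n)$, hence $\ge \exp(-n\epsilon_{n,\beta}^2)$ after setting $\epsilon_{n,\beta}^2 = 2/n$ (or absorbing the $p\log n$ factor into the rate). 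For the $\sigma^2_f$-factor, I would note that the event $|(1+\sigma^2_{f0})/(1+\sigma^2_f)-1|\le\delta_{\sigma^2_f}$ is an interval of positive length around $\sigma^2_{f0}$, whose length is of order $\delta_{\sigma^2_f}(1+\sigma^2_{f0})$; since the prior on $\sigma^2_f$ has a density bounded below on compact sets (it is the slab variance, given a proper prior such as an inverse-gamma or truncated normal), the prior mass of this interval is at least a constant times its length, i.e.\ $\gtrsim \delta_{\sigma^2_f} \gtrsim \epsilon_{n,\sigma^2_f}^2$, which is $\ge \exp(-n\epsilon_{n,\sigma^2_f}^2)$ after setting $\epsilon_{n,\sigma^2_f}^2 = 2/n$ and checking $n\epsilon_{n,\sigma^2_f}^2\ge 2$. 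Multiplying the two factors gives $\pi_\theta(\mathscr A)\ge\exp(-n\epsilon_n^2)$, as claimed.

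The main obstacle I anticipate is making the reduction to the location-scale Gaussian divergence rigorous, since the true per-coordinate marginal under the spike-slab model is a two-component mixture $(1-k/n)N(x_i^\top\beta,1) + (k/n)N(x_i^\top\beta,1+\sigma^2_f)$, not a pure Gaussian; one must either (i) exploit that $k/n\to0$ so the mixture is a vanishing perturbation of the $N(\cdot,1)$ component and bound the $\alpha$-divergence contribution of the slab coordinates separately, or (ii) work with the conditional model given the latent inclusion indicators and observe that the divergence is largest for the configuration that matches the true sparsity pattern, so the stated neighbourhood still suffices. Either route introduces a $k/n$ factor that must be shown not to spoil the rate — it only helps, since it shrinks the variance-mismatch penalty — and care is needed that the constants hidden in ``$\lesssim$'' do not depend on $n$. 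The remaining steps ($\epsilon_n^2$ bookkeeping, verifying $n\epsilon_n^2\ge 2$, and the mean value theorem estimate) are routine and parallel to the proof of Theorem \ref{prior_mass_cond}.
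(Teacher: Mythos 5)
Your overall architecture is the paper's: bound the $\alpha$-R\'enyi divergence on the neighbourhood $\mathscr{A}$ so that $\mathscr{A}\subset\mathscr{A}'$, then lower-bound the prior mass of the two factor events ($\beta$ via the coordinatewise mean-value-theorem estimate under the Gaussian prior, $\sigma^2_f$ via the prior mass of an interval of length of order $\delta_{\sigma^2_f}$), and do the $\epsilon_n^2$ bookkeeping. Those parts match the paper essentially step for step (the paper makes the $\sigma^2_f$ calculation explicit with a $\mathrm{Gamma}(1,\kappa_2)$ prior on $\sigma_f^{-2}$, but your ``density bounded below near $\sigma^2_{f0}$'' argument carries the same content).

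The genuine gap is at the step you yourself flag as ``the main obstacle'': bounding $D^{(n)}_{\alpha,\theta_0}(P_\theta,P_{\theta_0})$ when $P_\theta$ is a Gaussian mixture. Neither of your two candidate routes is carried out, and as stated neither closes cleanly: route (i) invokes a ``vanishing perturbation'' heuristic requiring $k/n\to 0$ and unspecified stability of $\alpha$-divergences under mixture perturbations, and route (ii) rests on the unproven claim that the divergence is maximized at the configuration matching the true sparsity pattern. The paper needs no perturbation or limit argument: it represents $P_\theta$ as the two-component mixture $(1-k/n)\,\mathrm{N}(X\beta,I)+(k/n)\,\mathrm{N}(X\beta,(1+\sigma^2_f)I)$ (same mixture weights under $\theta$ and $\theta_0$) and applies joint convexity of $D^{(n)}_{\alpha,\theta_0}(\cdot,\cdot)$ in both arguments, which immediately gives the weighted bound $(1-k/n)D_\alpha(\mathrm{N}(X\beta,I),\mathrm{N}(X\beta_0,I))+(k/n)D_\alpha(\mathrm{N}(X\beta,(1+\sigma^2_f)I),\mathrm{N}(X\beta_0,(1+\sigma^2_{f0})I))$; each term is then evaluated with the closed-form R\'enyi divergence between multivariate Gaussians, and the variance part is controlled exactly by the ratio condition $\bigl|\tfrac{1+\sigma^2_{f0}}{1+\sigma^2_f}-1\bigr|\le\delta_{\sigma^2_f}$ defining $\mathscr{A}$, under the proviso $\alpha>\delta_{\sigma^2_f}$ (see \eqref{eqn:alt_proof_2}). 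The $k/n$ factor is simply retained in the final bound rather than argued away. To complete your proof you should replace the perturbation/latent-indicator sketches with this convexity step (or, equivalently, a data-processing argument after augmenting with the shared inclusion indicators, which amounts to the same inequality) and then verify, as the paper does, that on $\mathscr{A}$ the resulting bound is at most $n\epsilon_n^2$ with the stated $\delta_\beta$, $\delta_{\sigma^2_f}$.
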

\begin{proof}
For simplicity of exposition, we assume $\sigma^2 = 1$.  Suppose we assume \(\|X\|_{2}^2  = n\).  Further, we define the set \(\mathscr{A}'\) as
{\footnotesize
\begin{align}\label{eqn:alt_proof_1}
\mathscr{A}' = \bigg\{\theta = (\beta, \sigma^2_f) : D^{(n)}_{\alpha, \theta_0}(P_\theta,\ P_{\theta_0})\leq n\epsilon^2_n = n\bigg[\frac{\epsilon_{n, \beta}^2}{\frac{\alpha}{2}} + \frac{\epsilon_{n, \sigma^2_f}^2}{\frac{1}{2(1-\alpha)}}\bigg]\bigg\}.
\end{align}}
Under the proposed framework in \eqref{ssec:alternative}, the $\alpha$-Rényi divergence between $$P_\theta\equiv \bigg(1-\frac{k}{n}\bigg) \mbox{N}(X\beta, I) +\frac{k}{n} \mbox{N}(X\beta, (1 + \sigma^2_f) I),$$ and the true data generating mechanism $$ P_{\theta_0}\equiv \bigg(1-\frac{k}{n}\bigg) \mbox{N}(X\beta_0, I) +\frac{k}{n} \mbox{N}(X\beta_0, (1 + \sigma^2_{f0}) I), $$ can be bounded as follows
{\small
\begin{align}\label{eqn:alt_proof_2}
&\quad D^{(n)}_{\alpha, \theta_0}(P_\theta,\ P_{\theta_0}) \notag\\
&\leq \bigg(1-\frac{k}{n}\bigg) D^{(n)}_{\alpha, \theta_0}(\mbox{N}(X\beta, I), \mbox{N}(X\beta_0, I))\ +\  \notag\\
&\quad \ \frac{k}{n}\ D^{(n)}_{\alpha, \theta_0}(\mbox{N}(X\beta, (1 + \sigma^2_{f}),\ \mbox{N}(X\beta_0, (1 + \sigma^2_{f0}) ),\notag\\
&=\frac{\alpha}{2}\bigg[\bigg(1-\frac{k}{n}\bigg) + \frac{k/n}{\alpha(1+\sigma^2_{f0})+(1-\alpha)(1+\sigma^2_{f}) }\bigg]\notag\\
&\quad\ \ ||X\beta - X\beta_0||^2_2 + \frac{k}{n}\frac{1}{2(1-\alpha)}\frac{[\alpha(1+\sigma^2_{f0}) + (1-\alpha)(1+\sigma^2_{f})]^p}{[(1+\sigma^2_{f})^{p(1-\alpha)}]\times[(1+\sigma^2_{f0})^{p\alpha}]},\notag\\
&\leq \frac{n\alpha}{2}||\beta - \beta_0||^2_2\ + \ \frac{k}{n}\frac{1}{2(1-\alpha)}\frac{\bigg[\alpha\frac{(1+\sigma^2_{f0})}{(1+\sigma^2_{f})} + (1-\alpha)\bigg]^p}{\bigg[\frac{(1+\sigma^2_{f0})}{(1+\sigma^2_{f})}\bigg]^{\alpha p}},\notag\\
&\leq \frac{n\alpha}{2}||\beta - \beta_0||^2_2\ + \ \frac{k}{n}\frac{1}{2(1-\alpha)}\bigg[\frac{1-\alpha}{1 -\delta_{\sigma^2_f}}\bigg]^p\notag\\
&\leq \frac{n\alpha}{2}||\beta - \beta_0||^2_2\ + \ \frac{k}{n}\frac{1}{2(1-\alpha)},
\end{align}}
provided $\alpha > \delta_{\sigma^2_f}$. The first inequality follows from the convexity of $ D^{(n)}_{\alpha, \theta_0}(\cdot,\cdot)$ in both the arguments. The second step utilizes the expression for Renyi divergence between two multivariate normal distributions \cite{GIL2013124}. Rest are simple algebraic manipulation.

From the simplifications in \eqref{eqn:alt_proof_2}, since $\mathscr{A}\subset\mathscr{A'}$, one can write
\begin{align*}
&\pi_\theta(\mathscr{A'})
\geq  \pi_\beta\left( \beta: \|\beta - \beta_0\|_2^2 \leq \delta_{\beta} \right) + \notag\\
&\pi_{\sigma^2_f}\bigg(\sigma^2_{f}: \bigg|\frac{1 +\sigma^2_{f0}}{1 +\sigma^2_{f}} - 1\bigg| \leq \delta_{\sigma^2_{f}}\bigg) 
\geq \pi_\theta(\mathscr{A}),
\end{align*}
where $\delta_{\sigma^2_{f}} =  \epsilon^2_{n, \sigma^2_{f0}}$, and
$\delta_\beta = \frac{2\epsilon^2_{n, \beta}}{n\alpha}$. 

First, we consider   \(\beta \sim \mbox{N}(0, \phi I_p)\) and simplify as earlier, 
{\small
\begin{align*}
\pi_\beta(||\beta -\beta_0||^2_{2}\leq \delta_{\beta})
\geq \bigg[2M\ \sqrt{\frac{\delta_\beta}{p}}\bigg]^p
     = \exp(-K^{\prime}p\log n),  
\end{align*}}
where $K^{\prime}$ is a constant. Then, we set $\epsilon^2_{n, \beta} = 2/n$, and we have $n\epsilon^2_{n, \beta}\geq 2$. 
Next, we consider  an usual prior on \(\sigma^{-2}_f \sim \mbox{\rm Gamma}(\kappa_1, \kappa_2)\). We set $\kappa_1=1$ for simplicity of exposition. Then, 
{\small
\begin{align*}
&\quad\pi_{\sigma^2_f}\bigg(\bigg|\frac{1+\sigma^2_{f0}}{1+\sigma^2_{f}} - 1\bigg|\leq \delta_{\sigma^2_f}\bigg)\\
&=\int_{\frac{1-\delta_{\sigma^2_f}}{\sigma^2_{f0} +\delta_{\sigma^2_f}}}^\frac{1+\delta_{\sigma^2_f}}{\sigma^2_{f0} -\delta_{\sigma^2_f}}\kappa_2[e^{-\kappa_2 u}] du\geq \int_{\frac{1-\delta_{\sigma^2_f}}{\sigma^2_{f0} -\delta_{\sigma^2_f}}}^\frac{1+\delta_{\sigma^2_f}}{\sigma^2_{f0} -\delta_{\sigma^2_f}}\kappa_2[e^{-\kappa_2 u}] du\notag\\
&\geq\ 2\kappa_2 M^{\prime}\ \bigg(\frac{\delta_{\sigma^2_f}}{\sigma^2_{f0} -\delta_{\sigma^2_f}}\bigg)\ 
\geq\ 2\kappa_2 M^{\prime}\ \bigg(\frac{\delta_{\sigma^2_f}}{\sigma^2_{f0}}\bigg)\notag\\
&=\ K^{\prime\prime}\ \delta_{\sigma^2_f} \ =\ K^{\prime\prime}\exp (-\log n),
\end{align*}}
where $K^{\prime\prime}>0$ is a constant. Then, we set $\epsilon^2_{n, \delta_{\sigma^2_f}} = 2/n$, and we have $n\epsilon^2_{n, \delta_{\sigma^2_f}}\geq 2$. Next, we set 
$n\epsilon^2_n = n\bigg[\frac{\epsilon_{n, \beta}^2}{\frac{\alpha}{2}} + \frac{\epsilon_{n, \sigma^2_f}^2}{\frac{1}{2(1-\alpha)}}\bigg] \geq 2,$ since $\alpha\in(0, 1)$, and  we have the proof for the prior mass condition, under the alternative formulation in \eqref{eqn:alt_mod2}.
\end{proof}
\begin{corollary}\label{post_contract}
    Assume \( n\epsilon_n^2 \geq 2 \), where $\epsilon_n$ is defined in Theorem \eqref{prior_mass_cond_alt}. Let $\theta = (\beta,\sigma^2_f)$, and $\theta_0 = (\beta_0, \sigma^2_{f0})$ represents the true parameter values.
    Then, given \( \alpha \in (0, 1) \), for any \( D \geq 2 \) and \( t > 0 \), the following holds
        $\Pi_{n, \alpha}\left(\frac{1}{n}D^{(n)}_{\theta_0, \alpha}(\theta, \theta_0) \geq \frac{D + 3t}{1 - \alpha} \epsilon_n^2 \bigg| W^{(n)}\right) \leq \exp(-tn\epsilon_n^2)$
    with probability at least \( 1 - 2/(D - 1 + t)^2 n \epsilon_n^2 \) under \( \mathbb{P}_{\theta_0}^{(n)} \).
\end{corollary}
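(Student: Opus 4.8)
The plan is to deduce the corollary directly from Theorem~\ref{Thm:contraction} by supplying its one nontrivial hypothesis, the prior mass condition \eqref{eq:prior_Mass}. The model of Section~\ref{ssec:alternative}, regarded as the data-generating class, is well specified at $\theta_0=(\beta_0,\sigma^2_{f0})$, so $\theta^\ast=\theta_0$ and the divergence in the conclusion is exactly $D^{(n)}_{\theta_0,\alpha}(\theta,\theta_0)$, matching the statement; the condition $n\epsilon_n^2\geq 2$ is assumed outright. Theorem~\ref{prior_mass_cond_alt} already gives $\pi_\theta(\mathscr{A})\geq e^{-n\epsilon_n^2}$ for the explicit set $\mathscr{A}=\{\theta=(\beta,\sigma^2_f):\ \|\beta-\beta_0\|_2^2\leq\delta_\beta,\ |(1+\sigma^2_{f0})/(1+\sigma^2_f)-1|\leq\delta_{\sigma^2_f}\}$, so it remains to show $\mathscr{A}\subseteq N_n(\theta_0,\epsilon_n;\theta_0)$, which yields $\pi_\theta\big(N_n(\theta_0,\epsilon_n;\theta_0)\big)\geq\pi_\theta(\mathscr{A})\geq e^{-n\epsilon_n^2}$ and lets Theorem~\ref{Thm:contraction} apply verbatim.

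For the Kullback--Leibler part of the neighbourhood, i.e. $\mbox{E}_{p_{\theta_0}^{(n)}}\big[\log(p_{\theta_0}^{(n)}/p_\theta^{(n)})\big]\leq n\epsilon_n^2$, I would recycle the computation already carried out for the Rényi divergence in the proof of Theorem~\ref{prior_mass_cond_alt}: KL is the $\alpha\uparrow1$ limit of the Rényi divergence, and by joint convexity of the KL divergence it is at most the mixture-weighted sum $(1-k/n)\,D\big(\mbox{N}(X\beta_0,I),\mbox{N}(X\beta,I)\big)+(k/n)\,D\big(\mbox{N}(X\beta_0,(1+\sigma^2_{f0})I),\mbox{N}(X\beta,(1+\sigma^2_f)I)\big)$; each component divergence has the usual closed form, with a quadratic-in-mean term controlled by $\|X\|_2^2\|\beta-\beta_0\|_2^2=n\|\beta-\beta_0\|_2^2$ and a variance-ratio term controlled by $\delta_{\sigma^2_f}$, so substituting the prescribed radii $\delta_\beta$ and $\delta_{\sigma^2_f}$ yields the bound $n\epsilon_n^2$ (those radii were chosen for exactly this). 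The only change from the Rényi computation in the paper is passing to the $\alpha=1$ limit in the variance combination, which only affects constants.

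The genuinely delicate step, and the one I expect to be the main obstacle, is the second-moment requirement $\mbox{E}_{p_{\theta_0}^{(n)}}\big[\log^2(p_{\theta_0}^{(n)}/p_\theta^{(n)})\big]\leq n\epsilon_n^2$: the log-likelihood ratio of two Gaussian \emph{mixtures} has no closed form and $x\mapsto x^2$ is not concave, so the component-wise convexity trick is unavailable. I would write $\log(p_{\theta_0}^{(n)}/p_\theta^{(n)})=\sum_{i=1}^n\log\big(p_{\theta_0}(W_i)/p_\theta(W_i)\big)$ as a sum of independent terms, bound each summand deterministically by an affine function of the Gaussian quadratic form $\|\epsilon_i\|_2^2$ (using that on $\mathscr{A}$ both the mean shift $X(\beta_0-\beta)$ and the variance ratio $(1+\sigma^2_{f0})/(1+\sigma^2_f)$ are uniformly controlled, so each mixture likelihood ratio is sandwiched between fixed positive bounds), and then invoke sub-exponential tail and moment bounds for Gaussian quadratic forms to get $\mbox{E}_{p_{\theta_0}^{(n)}}\big[\log^2(p_{\theta_0}^{(n)}/p_\theta^{(n)})\big]\lesssim n\big(\|\beta-\beta_0\|_2^2+\delta_{\sigma^2_f}^2\big)+(\text{lower order})$, which is $O(n\epsilon_n^2)$ for the chosen $\delta_\beta,\delta_{\sigma^2_f}$. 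With $\mathscr{A}\subseteq N_n(\theta_0,\epsilon_n;\theta_0)$ established, \eqref{eq:prior_Mass} holds, and Theorem~\ref{Thm:contraction} applied with $\theta^\ast=\theta_0$ delivers exactly the stated posterior bound with the stated $\mathbb{P}^{(n)}_{\theta_0}$-probability; the corollary attached to Theorem~\ref{prior_mass_cond} follows by the same two-step recipe, with the simpler non-mixture Gaussian KL and second-moment computations of that proof in place of the mixture argument above.
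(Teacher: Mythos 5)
Your proposal follows the same route as the paper, which offers no separate argument for this corollary beyond plugging the prior-mass bound of Theorem~\ref{prior_mass_cond_alt} into Theorem~\ref{Thm:contraction} with $\theta^\ast=\theta_0$ (the well-specified case). The genuine added value in your write-up is the intermediate step you insert: you observe, correctly, that the prior mass condition \eqref{eq:prior_Mass} in Theorem~\ref{Thm:contraction} concerns the Kullback--Leibler neighbourhood $N_n(\theta_0,\epsilon_n;\theta_0)$, defined through the first and second moments of the log-likelihood ratio, whereas the proof of Theorem~\ref{prior_mass_cond_alt} controls the $\alpha$-R\'enyi divergence on the parameter ball $\mathscr{A}$; hence the containment $\mathscr{A}\subseteq N_n(\theta_0,\epsilon_n;\theta_0)$ must be checked, a step the paper leaves implicit. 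Your plan for that step is sound in outline: the KL part mirrors the paper's R\'enyi computation via joint convexity of KL over the equal-weight mixture components and the closed-form Gaussian divergences, with the radii $\delta_\beta$, $\delta_{\sigma^2_f}$ doing the same work as before; the second-moment part is indeed the delicate piece, and your sandwich of the mixture log-ratio by the maximum of the two component log-ratios (each affine in a Gaussian quadratic form) is a workable device, with multiplicative constants absorbed by rescaling $\epsilon_n$. Be aware that this last verification remains a sketch in your proposal --- in particular the raw second moment of the $n$-fold log-ratio also contains the squared total KL, so one should confirm the chosen radii keep that term below $n\epsilon_n^2$ --- but this is a refinement of an omission in the paper rather than a divergence from its proof, and the corollary then follows verbatim from Theorem~\ref{Thm:contraction} exactly as you state.
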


\section{Performance evaluation}

\subsection{Linear regression}

We first generate $n$ observations from the linear regression model $y_i = \beta^{T} \mathbf{x}_i + \epsilon_i,\ i\in [n]$, where $\epsilon_i\stackrel{i.i.d}{\sim}\mbox{N}(0, \sigma^2)$. Then, we first permute the first $s_0$ observations via multiplying the design matrix $X= [\mathbf{x}_1,\ldots, \mathbf{x}_n]$ by 
\[
\begin{pmatrix}
\begin{array}{ccccc|cccc}
0 & 0 & \cdots & 0& 1 & 0 & 0& \cdots & 0 \\
0 & 0 & \cdots &1&0 & 0 & 0& \cdots & 0 \\
\vdots & \vdots & \vdots & \vdots &\vdots & \vdots & \vdots & \vdots&\vdots \\
1 & 0 & \cdots & 0 & 0 &0& \cdots & 0 &0\\
\hline
0 & 0 & \cdots & 0 & 0& 1 & 0& \cdots & 0 \\
0 & 0 & \cdots & 0 & 0 &0 &  1&\cdots & 0 \\
\vdots & \vdots & \vdots & \vdots & \vdots & \vdots &\vdots & \vdots&\vdots \\
0 & 0 & \cdots & 0 & 0 & 0& 0& \cdots & 1 \\
\end{array}
\end{pmatrix},
\]
and then repeat the observation $(\mathbf{x}_1, y_1)$ once as the $(s_0+1)$-th observation, so that the posterior mean of $\Pi_0$ is
\[
\begin{pmatrix}
\begin{array}{cccccc|cccc}
0 & 0 & \cdots &0& 0& 1 & 0 & 0& \cdots & 0 \\
0 & 0 & \cdots&0 &1&0 & 0 & 0& \cdots & 0 \\
0 & 0 & \cdots &1&0&0 & 0 & 0& \cdots & 0 \\
\vdots & \vdots & \vdots & \vdots &\vdots & \vdots & \vdots & \vdots&\vdots \\
0.5 & 0.5 & \cdots & 0 & 0 &0& \cdots & 0 &0&0\\
0.5 & 0.5 & \cdots & 0 & 0 &0& \cdots & 0 &0&0\\
\hline
0 & 0 & \cdots & 0 & 0& 0& 1 & 0& \cdots & 0 \\
0 & 0 & \cdots & 0 & 0 &0 & 0& 1&\cdots & 0 \\
\vdots & \vdots & \vdots & \vdots& \vdots & \vdots & \vdots &\vdots & \vdots&\vdots \\
0 & 0 & \cdots & 0 & 0 & 0& 0&0& \cdots & 1 \\
\end{array}
\end{pmatrix}.
\]
We set $s_0= 6$, $\sigma = 0.1$, and $\beta = (1,\ldots, 1)^T\in\mathbf{R}^{20}$.
Our goal is to conduct full Bayesian inference of the parameters $\Pi$ and $(\beta,\sigma^2)$ via the proposed methodology in \ref{ssec:main_model}, for varying sample size $n\in\{100, 150, 200, 250\}$ and values of the temperature parameter $\kappa\in(0, 1)$.

In Figure \ref{fig:reg_coef}, the posterior distributions of \(\beta\) for varying \(\kappa \in \{1/n, 0.99\}\) with a sample size of \(n=100\) appear largely similar. However, as shown in Figure \ref{fig:pi_posterior}, the proposed methodology with \(\kappa=1/n\) achieves superior recovery of the permutation matrix \(\Pi\) compared to the case where \(\kappa=0.99\) is used. This highlights the effectiveness of employing an adaptive temperature parameter \(\kappa \approx 1/n\) in enhancing the recovery of the parameters of interest.

We consolidate the aforementioned findings through extensive repeated simulations, varying the sample size \(n \in \{100, 150, 200, 250\}\) and the temperature parameter \(\kappa \in \{1/n, 0.1, 0.5, 0.75, 0.99\}\). The results, summarized in Tables \ref{tab:performance1} and \ref{tab:performance2}, indicate that the recovery of both \(\beta\) and \(\Pi\) improves when a lower temperature parameter \(\kappa\) is utilized.

On the computational front, the MCMC algorithms proposed in Section \ref{ssec:mcmc} are meticulously designed to provide a unified computational scheme for any \(\kappa \in (0, 1]\). As demonstrated in Figure \ref{fig:linreg_timing}, the computational times remain comparable across varying \(\kappa\) for a fixed sample size \(n\). Additionally, the proposed computational scheme scales favorably with increasing \(n\).

\begin{figure}[htbp]
    \centering
    \begin{subfigure}[b]{0.45\textwidth}
        \centering
        \includegraphics[width=\textwidth,height=0.2\textheight]{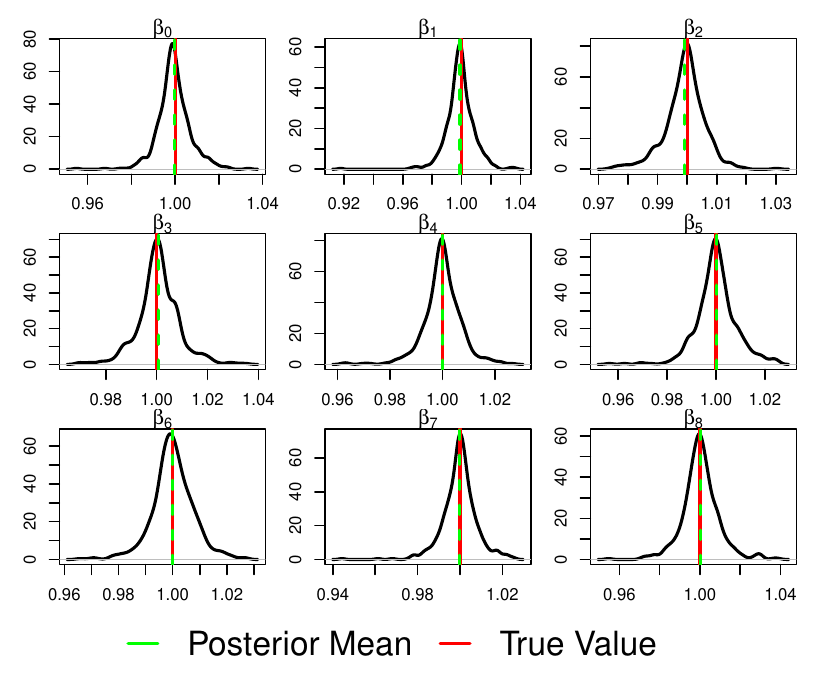}
        \label{fig:alpha001}
    \end{subfigure}
    \hfill
    \begin{subfigure}[b]{0.45\textwidth}
        \centering
        \includegraphics[width=\textwidth,height=0.2\textheight]{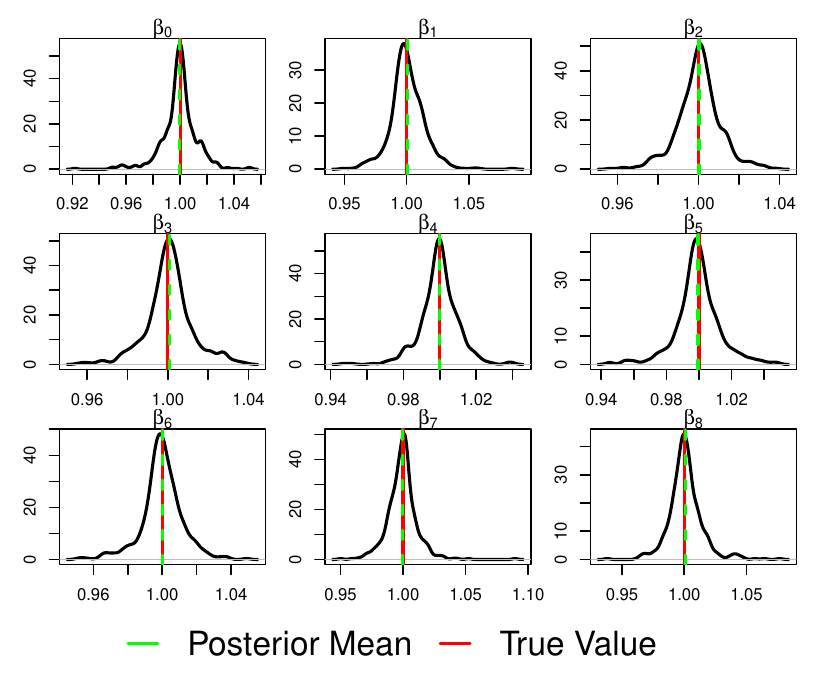}
        \label{fig:alpha99}
    \end{subfigure}
    \caption{\textbf{Linear regression.} Posterior distributions of $\beta$ for $\alpha=\frac{1}{n}$ at the \textbf{top}, for $\alpha=0.99$ at the \textbf{bottom}.}
    \label{fig:reg_coef}
\end{figure}

\begin{figure}[htbp]
    \centering
    \begin{subfigure}[b]{0.45\textwidth}
    \includegraphics[width=\textwidth,height=0.2\textheight]{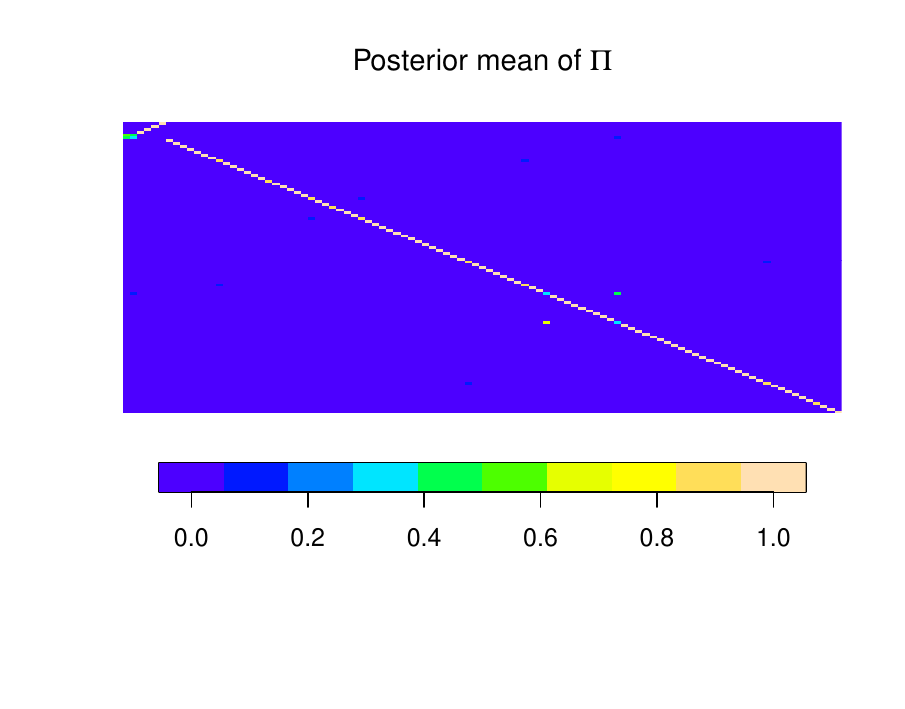}
    \end{subfigure}
    \begin{subfigure}[b]{0.45\textwidth}
    \includegraphics[width=\textwidth,height=0.2\textheight]{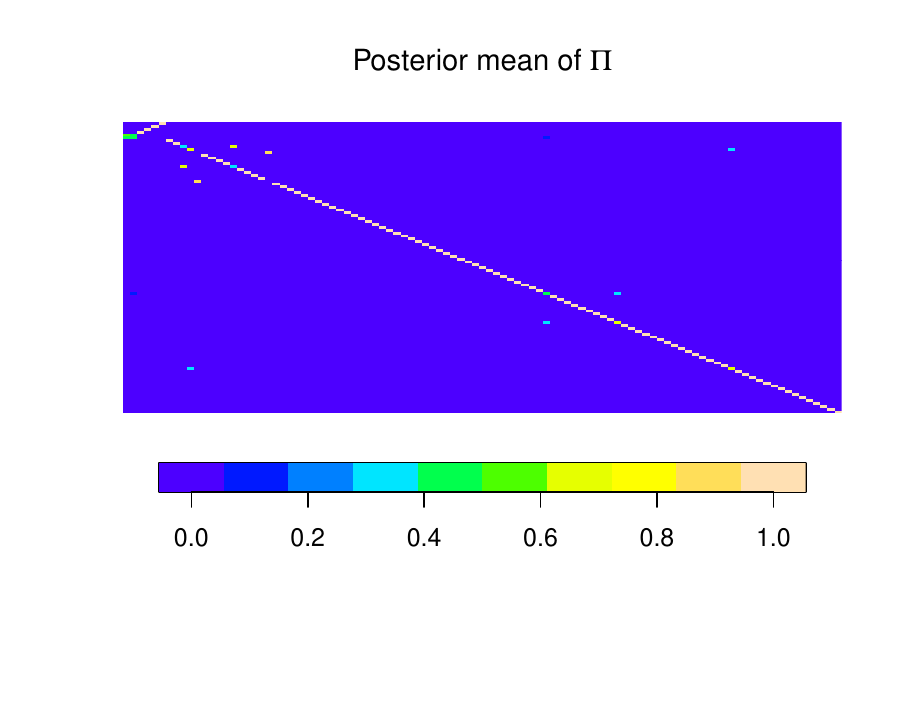}
    \end{subfigure}
    \caption{\textbf{Linear regression.} Posterior mean of $\Pi$ for $\alpha=\frac{1}{n}$ at the \textbf{top}, for $\alpha=0.99$ at the \textbf{bottom}. 
    }
    \label{fig:pi_posterior}
\end{figure}


\begin{table}[!htb]
\caption{\textbf{Linear regression.} The maximum $\mathcal{L}_1$ error (in $10^{-4}$ unit) for varying temperature $\kappa\in\{\frac{1}{n}, 0.1, 0.5, 0.75, 0.99\}$ and sample size $n\in\{100, 150\}$, calculated via repeated simulations. Here $\beta_0$ and $\Pi_0$ denote the true values of the parameters $(\beta, \Pi)$, and $\hat{\beta}$ and $\hat{\Pi}$ denote the corresponding posterior mean. 
}
\label{tab:performance1}
\small\setlength\tabcolsep{4pt}
\scalebox{0.9}{
\begin{tabular}{|c|c|c|c|c|}
    \hline
      \multicolumn{1}{|c|}{} &
      \multicolumn{2}{|c|}{$n= 100$} &
      \multicolumn{2}{|c|}{$n= 150$}\\
    \hline
     \multicolumn{1}{|c|}{$\kappa$ } &
      \multicolumn{1}{|c|}{$\frac{1}{p}\sum_{i=1}^p |\hat{\beta}_i - \beta_i|$} &
      \multicolumn{1}{|c|}{$d_H(\hat{\Pi}, \Pi_0)$} &
      \multicolumn{1}{|c|}{$\frac{1}{p}\sum_{i=1}^p |\hat{\beta}_i - \beta_i|$} &
      \multicolumn{1}{|c|}{$d_H(\hat{\Pi}, \Pi_0)$} \\
      \hline
      $\frac{1}{n}$ & $9.63$ & $4.19$& $8.90$ & $2.36$\\
\hline
$0.1$ &$12.20$ & $8.63$ & $9.86$ & $5.07$\\
\hline
$0.5$ & $14.58$& $7.25$ & $13.32$ & $5.70$\\
\hline
$0.75$ & $20.67$ & $11.06$& $12.65$ & $3.44$ \\
\hline
$0.99$ & $19.74$ & $10.54$& $17.25$& $6.48$\\
\hline
\end{tabular}
}
\end{table}

\begin{table}[!htb]
\caption{\textbf{Linear regression.} The maximum $\mathcal{L}_1$ error (in $10^{-4}$ unit) for varying temperature $\kappa\in\{\frac{1}{n}, 0.1, 0.5, 0.75, 0.99\}$ and sample size $n\in\{200, 250\}$. Here $\beta_0$ and $\Pi_0$ denote the true values of the parameters $(\beta, \Pi)$, and $\hat{\beta}$ and $\hat{\Pi}$ denote the corresponding posterior mean. 
}
\label{tab:performance2}
\small\setlength\tabcolsep{4pt}
\scalebox{0.9}{
\begin{tabular}{|c|c|c|c|c|}
    \hline
      \multicolumn{1}{|c|}{} &
      \multicolumn{2}{|c|}{$n= 200$} &
      \multicolumn{2}{|c|}{$n= 250$}\\
    \hline
     \multicolumn{1}{|c|}{$\kappa$ } &
      \multicolumn{1}{|c|}{$\frac{1}{p}\sum_{i=1}^p |\hat{\beta}_i - \beta_i|$} &
      \multicolumn{1}{|c|}{$d_H(\hat{\Pi}, \Pi_0)$} &
      \multicolumn{1}{|c|}{$\frac{1}{p}\sum_{i=1}^p |\hat{\beta}_i - \beta_i|$} &
      \multicolumn{1}{|c|}{$d_H(\hat{\Pi}, \Pi_0)$} \\
\hline
$\frac{1}{n}$ & $7.88$ & $0.89$& $8.59$ & $0.63$\\
\hline
$0.1$ & $7.84$& $ 0.83$& $8.33$& $ 0.97$\\
\hline
$0.5$ &$9.02$ &$3.42$ & $11.79$ & $0.90$\\
\hline
$0.75$ &$13.21$ & $3.68$& $12.52$& $1.16$\\
\hline
$0.99$ & $13.53$& $3.31$& $18.57$ & $1.24$\\
\hline
\end{tabular}
}
\end{table}

\begin{figure}[htbp]
    \begin{subfigure}[b]{0.5\textwidth}
        \centering
        \includegraphics[width=\textwidth,height=0.2\textheight]{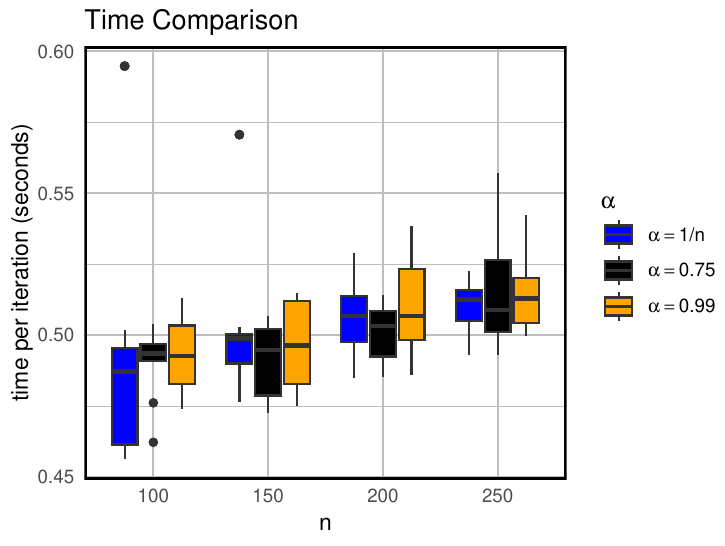}
    \end{subfigure}
    \caption{\textbf{Linear regression.} The per iteration time (in seconds) for varying temperature $\kappa\in\{\frac{1}{n}, 0.75, 0.99\}$ and sample size $n\in\{100, 150, 200, 250\}$. }
    \label{fig:linreg_timing}
\end{figure}
\subsection{Quantile regression}
To illustrate the flexibility of the proposed methodology beyond linear regression models, we extend it to quantile regression with sparsely permuted data. Further, we present findings from small-scale numerical experiments to demonstrate its efficacy in this context.

Suppose we observe data \((y_i, \mathbf{x}_i) \in \mathbb{R} \times \mathbb{R}^d\) for \(i \in [n]\), with mismatches occurring in at most \(k\) entries, where \(k \ll n\). We model the data as follows
\begin{align}\label{eqn:model1_quant}
 \mathbf{y} = \Pi X \beta + \boldsymbol{\epsilon}, \quad \epsilon \stackrel{i.i.d.}{\sim} \text{ALD}(\mathbf{0}, \sigma, \tau),
\end{align}
where \(\text{ALD}(\mathbf{0}, \sigma^2, \tau)\) denotes an asymmetric Laplace distribution with probability density function given by
$
    f_{\epsilon, \tau}(u \mid \sigma) = \frac{1}{\sigma} \exp \left\{ -\frac{\rho_\tau(u)}{\sigma} \right\},\ u\in\mathbf{R},
$
where \(\rho_\tau(u) = u (\tau - \mathbf{1}_{\{u < 0\}})\) is the check function, and \(\tau \in [0, 1]\) is the quantile level. The permutation matrix \(\Pi = ((\pi_{ij}))_{i,j=1}^n\) is assumed to satisfy \(d_H(\Pi, I_n) = \sum_{i=1}^n \sum_{j=1}^n |\pi_{ij} - 1| \leq k\), where \(d_H\) denotes the Hamming distance and \(I_n\) is the identity matrix of size \(n\). Our objective is to develop a fully Bayesian framework to estimate the parameters \(\Pi\) and \((\beta, \sigma^2)\). We begin by specifying prior distributions for \((\Pi, \beta, \sigma^2)\) as detailed in Section \ref{ssec:main_model}. We then adapt the computational scheme described in Section \ref{ssec:mcmc} to accommodate the new model. The modifications to the computation are straightforward; therefore, detailed explanations are omitted here due to space constraints.

\begin{table}[!htb]
\caption{\textbf{Quantile regression.} The maximum $\mathcal{L}_1$ error (in $10^{-4}$ unit) for varying temperature $\kappa\in\{\frac{1}{n}, 0.99\}$ and sample size $n\in\{100, 150,  250\}$, calculated via repeated simulations. Here $\beta_0$ and $\Pi_0$ denote the true values of the parameters $(\beta, \Pi)$, and $\hat{\beta}$ and $\hat{\Pi}$ denote the corresponding posterior mean. 
}
\label{tab:performance3}
\small\setlength\tabcolsep{4pt}
\scalebox{0.9}{
\begin{tabular}{|c|c|c|c|c|}
    \hline
      \multicolumn{1}{|c|}{} &
      \multicolumn{2}{|c|}{$\kappa= 0.99$} &
      \multicolumn{2}{|c|}{$\kappa= 1/n$}\\
    \hline
     \multicolumn{1}{|c|}{$n$ } &
      \multicolumn{1}{|c|}{$\frac{1}{p}\sum_{i=1}^p |\hat{\beta}_i - \beta_i|$} &
      \multicolumn{1}{|c|}{$d_H(\hat{\Pi}, \Pi_0)$} &
      \multicolumn{1}{|c|}{$\frac{1}{p}\sum_{i=1}^p |\hat{\beta}_i - \beta_i|$} &
      \multicolumn{1}{|c|}{$d_H(\hat{\Pi}, \Pi_0)$} \\
      \hline
      \hline
100 & 4.46 & 3.85 & 3.70 & 1.73 \\
\hline
150& 3.66 & 1.76 & 2.93 & 1.39 \\
\hline
\hline
250 &3.11 & 0.25 & 2.38 & 0.23 \\
\hline
\end{tabular}
}
\end{table}

\begin{figure}[htbp]
    \begin{subfigure}[b]{0.5\textwidth}
        \centering
        \includegraphics[width=\textwidth,height=0.2\textheight]{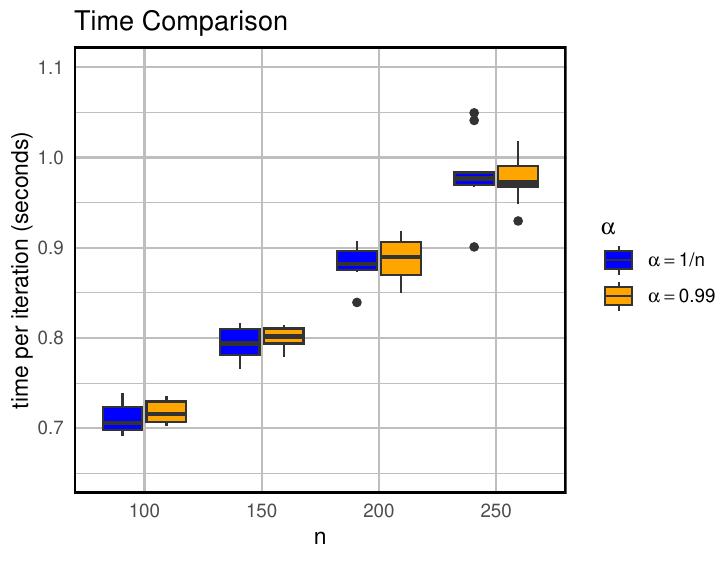}
    \end{subfigure}
    \caption{\textbf{Quantile regression.} The per iteration time (in seconds) for varying temperature $\kappa\in\{\frac{1}{n}, 0.99\}$ and sample size $n\in\{100, 150, 200, 250\}$. }
    \label{fig:pi_posterior}
\end{figure}

We report findings from a  small-scale repeated simulation, where we varied the sample size \(n \in \{100, 150,  250\}\) and the temperature parameter \(\kappa \in \{1/n, 0.99\}\). The results, as summarized in Table \ref{tab:performance2}, show that the recovery of both \(\beta\) and \(\Pi\) improves with the use of a lower temperature parameter \(\kappa\). Furthermore, Figure \ref{fig:linreg_timing} illustrates that computational times remain comparable across different values of \(\kappa\) for a fixed sample size \(n\). Additionally, the proposed computational scheme scales effectively with increasing \(n\).

\section*{Acknowledgment}
There was no internal or external funding for this work.

\bibliographystyle{ieeetr}
\bibliography{paper-ref, paper-ref_dbetel}
\vspace{12pt}
\color{red}

\end{document}